\newtheorem{theorem}{Theorem}[section]
\newtheorem{lemma}[theorem]{Lemma}
\newtheorem{proposition}[theorem]{Proposition}
\newtheorem{corollary}[theorem]{Corollary}
\theoremstyle{definition}
\newtheorem{definition}[theorem]{Definition}
\theoremstyle{remark}
\numberwithin{equation}{section}
\begin{document}

\title{\sc{Ribbon graphs and the fundamental group of surfaces}}

\author{Rodrigo D\'avila Figueroa}
\address{ IMATE UNAM, Unidad Cuernavaca, Av. Universidad s/n. Col. Lomas de Chamilpa,
C.P. 62210, Cuernavaca, Morelos, M\'exico.}
\email{rodrigo.davila@im.unam.mx}

  \thanks{BLANK} 
\subjclass{Primary 05C10, 05C20, 05C99 Secondary 32J15, 55Q }



\begin{abstract}
In the present work we are going to give a formal exposition of the ribbon graphs topic based on notes of Labourie \cite{Lab}, since is difficult to find as such in the literature. As an application we are going to  compute the fundamental group of surfaces using ribbon graphs as a combinatorial version of it.
\end{abstract}

\maketitle

\section*{introduction}
The ribbon graphs gain their mathematical popularity through the work of Penner \cite{Pen} who introduce a cell decomposition of Riemann moduli space, which was later used in Kontsevich's proof of Witten conjecture \cite{Kon}. The ribbon graphs are very useful for the study of the representation variety of surface groups $Hom(\pi_1(S),G)/G$ for a given surface $S$ and a group $G$. In the present work we are going to define the ribbon graphs, then we are going to use them to proof the classification theorem of surface and we are going to compute the fundamental group of a surface using the fundamental group of ribbon graphs.
 
 \section{Surfaces as 2-dimensional manifolds}
 
 \begin{definition}
 A surface is a connected 2-dimensional smooth manifold.
 \end{definition}
 
 A 2-dimensional chart for a surface $S$ is a pair $(U,\phi)$ where $U\subset S$ is an open set and $\phi:U\rightarrow \phi(U)\subset\mathbb{R}^2$ is an homeomorphism on its image.\\
 
 A collection of charts $\{(U_i,\phi_i)\}_{i\in I}$ is called an atlas for $S$ if $S=\cup_{i\in I}U_i$ and we say that the atlas is smooth or $C^{\infty}$ if the change of coordinates $\phi_i\circ\phi_j^{-1}:\phi_j(U_i\cap U_j)\rightarrow\phi_i(U_i\cap U_j)$ is a smooth function for all $i,j\in I$. Given a function $f:S\rightarrow\mathbb{R}$ we say that $f$ is smooth if $f\circ\phi_i^{-1}:\phi_i(U_i)\rightarrow\mathbb{R}$ is a smooth function from $\mathbb{R}^2$ to $\mathbb{R}$.\\

\begin{figure}[h]
\includegraphics{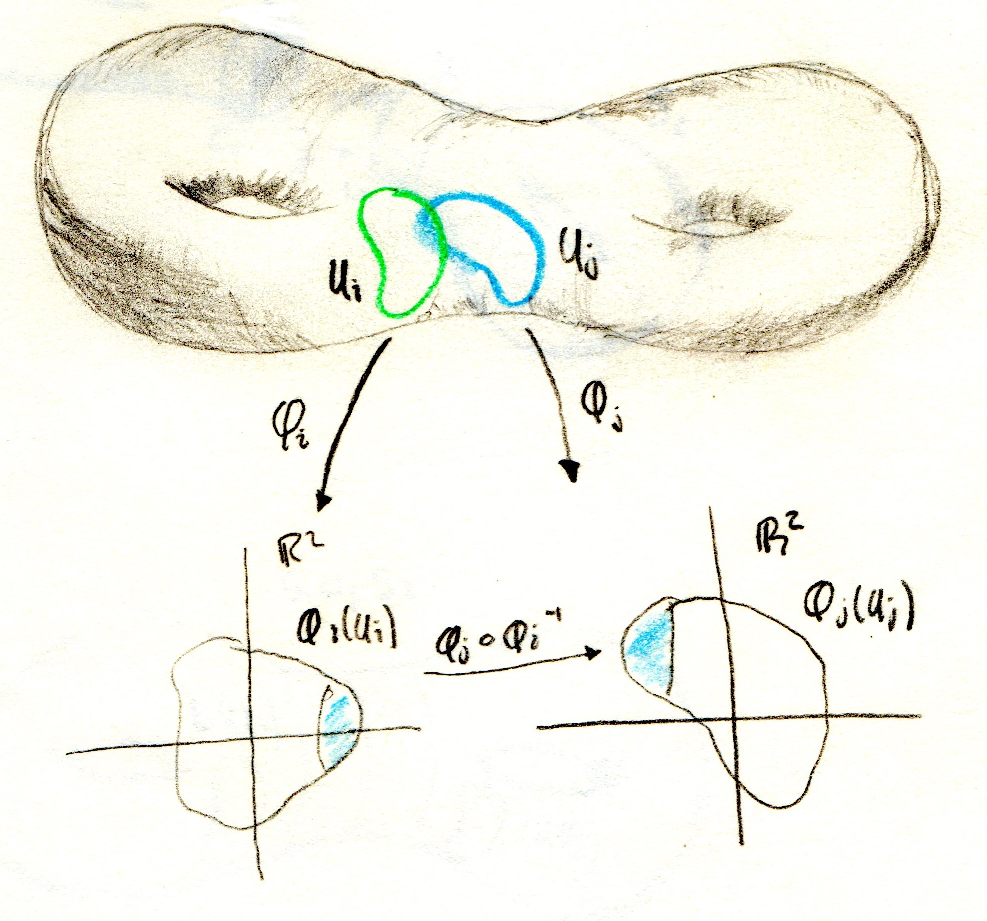}\centering
\caption{A surface.}
\end{figure}

\newpage
 \begin{definition}
 Let $S$ be a surface with atlas $\{(U_i,\phi_i)\}_{i\in I}$. The atlas is called oriented if the jacobian 
 $$Jac(\phi_i, \phi_j):=det(D(\phi_i\circ\phi_j^{-1}))$$
 is positive for all $i,j\in I$. Then we say that S is oriented.  
 \end{definition}

\subsection{Surfaces with boundary}
Let $H^+:=\{(x,y)\in\mathbb{R}^2\mbox{ } |\mbox{ } y\geq0\}$ be the closed upper half plane and $\partial H^+:=\{(x,y)\in\mathbb{R}^2\mbox{ } |\mbox{ } y=0 \}$ its boundary.\\

Given a surface $S$, a two dimensional chart with boundary is a pair $(U,\phi)$ where U is an open subset of $S$ and $\phi:U\rightarrow V\subset H^+$ is an homeomorphism into an open subset $V$ of $H^+$. The subset $\partial U:=\phi^{-1}(\phi(U)\cap\partial H^+)\subset U$ is the boundary of $U$.

\begin{definition}
Let $V_1$, $V_2$ be open subsets of $H^+$. A function $f:V_1\rightarrow V_2$ is smooth if there is an open subset $\tilde{V_1}$ of $\mathbb{R}^2$ with $\tilde{V_1}\cap H^+=V_1$ and a smooth function $\tilde{f}:\tilde{V_1}\rightarrow\mathbb{R}^2$ such that $f=\tilde{f}|_{V_1}$.
\end{definition}

An atlas of charts with boundary $\{(U_i,\phi_i)\}_{i\in I}$ is smooth if the change of coordinates $\phi_i\circ\phi_j^{-1}:\phi_j(U_i\cap U_j)\rightarrow\phi_i(U_i\cap U_j)$ is smooth for all $i,j\in I$ in the sense of the last definition.
\newpage
\begin{definition}
A surface with boundary is a surface $S$ with a smooth atlas of charts with boundary.
\end{definition}

Given a surface $S$ with boundary we say that $x\in S$ is a boundary point if $x\in\partial S$ for any chart $(U,\phi)$ containing it. The set of boundary points of $S$ is denoted by $\partial S$.

\subsection{Gluing surfaces}

We need to know how to construct new surfaces from surfaces with boundary, in order to do this, we need to glue the surfaces along their boundaries and we need to know how they look like in a neighborhood of a boundary component. For this we have to use the following lemma.

\begin{lemma}[Collar Lemma]
Let  $S$ be a surface with boundary $\partial S$ and $\mathcal{C}\subset \partial S$ a connected component. Then there is a neighborhood $U$ of $\mathcal{C}$ in $S$ and a diffeomorphism $\psi:U\rightarrow V$ into a subset $V\subset\mathbb{R}^2$ of the form $V\simeq\mathcal{C}\times[0,1]$ mapping $\mathcal{C}$ into $\mathcal{C}\times\{0\}$.
\end{lemma}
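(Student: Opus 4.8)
The plan is to build an inward-pointing vector field along $\mathcal{C}$ and to flow along it, so that the flow lines sweep out the collar. First I would cover $\mathcal{C}$ by charts with boundary $(U_i,\phi_i)$, $\phi_i:U_i\to V_i\subset H^+$. On each $V_i$ the coordinate field $\partial/\partial y$ is transverse to $\partial H^+$ and points into $H^+$, so its pullback $X_i:=(\phi_i^{-1})_*(\partial/\partial y)$ is a smooth vector field on $U_i$ that, along $\mathcal{C}\cap U_i$, is transverse to $\partial S$ and points into $S$. Choosing a partition of unity $\{\rho_i\}$ subordinate to $\{U_i\}$ (together with $U_0:=S\setminus\mathcal{C}$ carrying $X_0:=0$), set $X:=\sum_i\rho_i X_i$, a smooth vector field on a neighborhood of $\mathcal{C}$. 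Since "inward-pointing along $\partial S$" is a half-space (hence convex) condition on each tangent space over a boundary point, the convex combination $X$ is again inward-pointing, in particular transverse to $\mathcal{C}$, everywhere on $\mathcal{C}$. (Alternatively one could equip $S$ with a Riemannian metric, obtained again by a partition of unity, and use the inward unit normal together with the normal exponential map; but the flow argument is more elementary here.)

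Next I would consider the local flow $\Phi$ of $X$, with relatively open domain $\mathcal{D}\subset\mathcal{C}\times[0,\infty)$; because $X$ points inward, for each $p\in\mathcal{C}$ there is $\epsilon_p>0$ with $\{p\}\times[0,\epsilon_p)\subset\mathcal{D}$ (the corresponding ODE, read in a boundary chart and extended to an open subset of $\mathbb{R}^2$, has a solution that enters $H^+$ immediately and stays there for small time). Put $F(p,t):=\Phi_t(p)$. The differential $dF_{(p,0)}$ restricts to the identity on $T_p\mathcal{C}$ and sends $\partial/\partial t$ to $X_p\notin T_p\mathcal{C}$, hence is an isomorphism onto $T_pS$; by the inverse function theorem $F$ is a local diffeomorphism near every point of $\mathcal{C}\times\{0\}$. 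A standard argument — injectivity in a neighborhood of the "zero section", plus shrinking — then produces a continuous width function $\delta:\mathcal{C}\to(0,\infty)$ such that $F$ restricts to a diffeomorphism of $W:=\{(p,t):0\le t<\delta_p\}$ onto an open neighborhood $U$ of $\mathcal{C}$ in $S$.

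Finally I would reparametrize. The map $h(p,t):=(p,\,t/\delta_p)$ is a diffeomorphism $W\to\mathcal{C}\times[0,1)$ fixing $\mathcal{C}\times\{0\}$, so $h\circ F^{-1}:U\to\mathcal{C}\times[0,1)$ is almost what is wanted; restricting to the preimage of $\mathcal{C}\times[0,\tfrac12]$ and rescaling $t\mapsto 2t$ gives a diffeomorphism $\psi:U'\to\mathcal{C}\times[0,1]$ from a smaller neighborhood $U'$ of $\mathcal{C}$, with $\psi(\mathcal{C})=\mathcal{C}\times\{0\}$. Since $\mathcal{C}$ is a connected $1$-manifold without boundary it is diffeomorphic to $S^1$ or to $\mathbb{R}$, and in either case $\mathcal{C}\times[0,1]$ embeds as a subset $V\subset\mathbb{R}^2$ (an annulus, respectively a strip), which yields the statement.

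The main obstacle is the passage from the pointwise existence of short flow lines to a genuine collar over all of $\mathcal{C}$ simultaneously: establishing injectivity of $F$ on a full neighborhood of $\mathcal{C}\times\{0\}$ and extracting the width function $\delta$. When $\mathcal{C}$ is compact (a circle) one may take $\delta$ constant via a Lebesgue-number/compactness argument; when $\mathcal{C}\cong\mathbb{R}$ one must instead work with a locally finite refinement and paracompactness, and this bookkeeping is the delicate part. The remaining ingredients — constructing $X$, the computation of $dF_{(p,0)}$, and the reparametrization — are routine.
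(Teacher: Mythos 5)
Your argument is the standard proof of the collar neighbourhood theorem (inward-pointing vector field built by a partition of unity, flow off the boundary, inverse function theorem at the zero section, then a width function and reparametrization), and it is essentially correct. There is nothing in the paper to compare it with: the paper states the Collar Lemma without any proof, treating it as a known fact from the literature, so your write-up actually supplies more than the paper does. Two small points to tighten. First, if the width function $\delta$ is only continuous, the map $h(p,t)=(p,t/\delta_p)$ is merely a homeomorphism; choose a smooth positive function $\delta'\le\delta$ (which always exists below a continuous positive function) so that the rescaling is a diffeomorphism. Second, note that a set diffeomorphic to $\mathcal{C}\times[0,1]$ can never be open in $S$, so ``neighborhood'' in the statement must be read as a set whose interior contains $\mathcal{C}$ relative to $S$; your closed collar $U'$, which contains the open set corresponding to $\mathcal{C}\times[0,\tfrac12)$, satisfies exactly this, and your observation that $\mathcal{C}\times[0,1]$ embeds in $\mathbb{R}^2$ as an annulus or a strip (according as $\mathcal{C}\cong S^1$ or $\mathcal{C}\cong\mathbb{R}$) is precisely what the paper later uses when gluing along collars. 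The step you flag as delicate --- upgrading the local diffeomorphism to injectivity on a full neighborhood of $\mathcal{C}\times\{0\}$ --- is indeed where the real work lies, and the compactness (Lebesgue number) argument for $\mathcal{C}\cong S^1$ together with a locally finite refinement in the noncompact case is the accepted way to do it, so sketching it at that level is acceptable here.
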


\begin{figure}[h]
\includegraphics{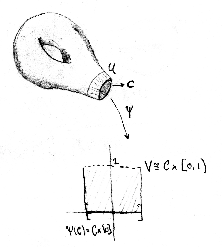}
\centering
\caption{Collar Lemma.}
\end{figure}

Let $S_1$, $S_2$ be two surfaces with boundary. Let $\mathcal{C}_1$ and $\mathcal{C}_2$ be two diffeomorphic components of $\partial S_1$ and $\partial S_2$ respectively. Then the gluing of a surface is as follows: Let $f:\mathcal{C}_1\rightarrow\mathcal{C}_2$ be a diffeomorphism. Consider the disjoint union $S_1\sqcup S_2$ and the following equivalence relation 
$$ x\sim y\Leftrightarrow y=f(x)$$
for $x\in\mathcal{C}_1$ and $y\in\mathcal{C}_2$. Then $S_1\cup_f S_2=S_1\sqcup S_2/\sim$ and we call this quotient the gluing surface of $S_1$ and $S_2$.

\begin{figure}[h]
\includegraphics{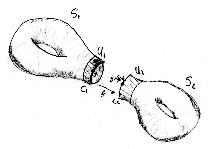}
\centering
\caption{Gluing surfaces.}
\end{figure}

The atlas of $S_1\cup_f S_2$ is now given in the following way: First, we take a smooth atlas $\{(U_i,\phi_i)\}_{i\in I}$ for $S_1\backslash\mathcal{C}_1$ and a smooth atlas $\{(V_j,\psi_j)\}_{j\in J}$ for $S_2\backslash\mathcal{C}_2$. We denote by $\iota_1:S_1\hookrightarrow S_1\cup_fS_2 $, $\iota_2:S_2\hookrightarrow S_1\cup_fS_2$ the canonical inclusions. Then $\{(\iota_1(U_i),\phi_i\circ\iota_1^{-1})\}_{i\in I}\cup\{(\iota_2(V_j),\psi_j\circ\iota_2^{-1}\}_{j\in J}$ is an atlas for the complement of the gluing curve in $S_1\cup_fS_2$.\\

\begin{figure}[H]
\includegraphics[scale=0.65]{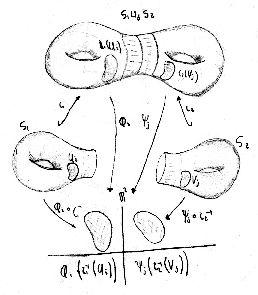}
\centering
\caption{Atlas for the complement of the gluing curve in $S_1\cup_f S_2$.}
\end{figure}

Now we consider a chart for the gluing curve which is compatible with the charts given above.\\
Using the collar lemma we construct this charts. Let $B_1$, $B_2$ be collar neighborhoods of $\mathcal{C}_1$, $\mathcal {C}_2$ respectively, where $\mathcal{C}_i$ are connected components of $\partial S_i$ for $i=1,2$, and $g_1:B_1\rightarrow\mathcal{C}_1\times(-1,0]$ a diffeomorphism such that $g_1(\mathcal{C}_1)=\mathcal{C}_1\times\{0\}$ and $g_2:B_2\rightarrow\mathcal{C}_2\times[0,1)$ a diffeomorphism such that $g_2(\mathcal{C}_2)=\mathcal{C}_2\times\{0\}$. Now consider the open subset $\mathcal{O}:=B_1\cup_fB_2$ of $S_1\cup_fS_2$ and fix an emmbeding $\iota:\mathcal{C}_2\times(-1,1)\rightarrow\mathbb{R}^2$. This is possible since $\mathcal{C}_2$ is either an interval or a circle. We define coordinates for $\mathcal{O}$ by $\psi:\mathcal{O}\rightarrow\mathbb{R}^2$ as 

$$\psi(x)=\left\{\begin{array}{rccl}(\iota\circ(f,id)\circ g_1)(x)& \mbox{if}  & x\in B_1\\ (\iota\circ g_2)(x) & \mbox{if} & x\in B_2\end{array}\right.$$
where $f:\mathcal{C}_1\rightarrow\mathcal{C}_2$ is a diffeomorphism.\\

\begin{figure}[h]
\includegraphics{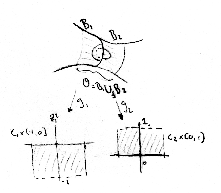}
\centering
\caption{Atlas for the gluing curve.}
\end{figure}

We have proven the following proposition
\begin{proposition}
$S_1\cup_fS_2$ is a surface with smooth atlas given by
$$\{(\iota_1(U_i),\phi_i\circ\iota_1^{-1})\}_{i\in I}\cup\{(\iota_2(V_j),\psi_j\circ\iota_2^{-1})\}_{j\in J}\cup\{\mathcal{O},\psi\}.$$
\end{proposition}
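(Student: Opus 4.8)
The plan is to verify, one condition at a time, that $S_1\cup_f S_2$ together with the displayed collection of pairs is a surface with boundary in the sense of the earlier definitions: that the underlying space is a connected Hausdorff second countable topological space, that the listed pairs are charts whose domains cover it, and that all transition functions are smooth. The topological part I would dispatch quickly. The space carries the quotient topology from $S_1\sqcup S_2$; it is connected because $S_1$ and $S_2$ are connected and the common image $\mathcal{C}$ of the glued curves is non-empty and meets both pieces; and it is Hausdorff and second countable because the identification is performed along the \emph{closed} subsets $\mathcal{C}_1\subset S_1$ and $\mathcal{C}_2\subset S_2$ via a homeomorphism between them, which by the Collar Lemma are contained in collar neighbourhoods $B_1$, $B_2$. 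In particular the canonical inclusions $\iota_1,\iota_2$ are topological embeddings and $\iota_k(S_k\setminus\mathcal{C}_k)$ is open, and these are exactly the ingredients needed to run the standard adjunction-space argument.

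Next I would check that the three families are genuine charts and that their domains cover $S_1\cup_f S_2$. Covering is immediate: $\{\iota_1(U_i)\}\cup\{\iota_2(V_j)\}$ covers the complement of $\mathcal{C}$, while $\mathcal{O}=B_1\cup_f B_2$ is an open neighbourhood of $\mathcal{C}$. Since each $\iota_k$ is an embedding and $\{(U_i,\phi_i)\}$, $\{(V_j,\psi_j)\}$ are smooth atlases for $S_1\setminus\mathcal{C}_1$, $S_2\setminus\mathcal{C}_2$, the maps $\phi_i\circ\iota_1^{-1}$ and $\psi_j\circ\iota_2^{-1}$ are homeomorphisms onto open subsets of $\mathbb{R}^2$ (or of $H^+$). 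For $(\mathcal{O},\psi)$ the only thing that needs care is that the two branches of the piecewise definition agree on the glued curve and fit together into a homeomorphism onto an open set: for $x\in\mathcal{C}_1$ one has $g_1(x)=(x,0)$, so the first branch gives $\iota(f(x),0)$; viewing the same point of $\mathcal{O}$ as $f(x)\in\mathcal{C}_2\subset B_2$, one has $g_2(f(x))=(f(x),0)$ and the second branch gives $\iota(f(x),0)$ as well, so $\psi$ is well defined. By the gluing lemma for continuous maps $\psi$ and its inverse are continuous; $\psi$ is injective because the images of the two branches, $\iota(\mathcal{C}_2\times(-1,0])$ and $\iota(\mathcal{C}_2\times[0,1))$, overlap exactly in $\iota(\mathcal{C}_2\times\{0\})$, matching the identification; and its image is $\iota(\mathcal{C}_2\times(-1,1))$, which is open in $\mathbb{R}^2$ (or in $H^+$, according to whether $\mathcal{C}_2$ is a circle or an interval). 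Hence $(\mathcal{O},\psi)$ is a chart.

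The core of the argument is the smoothness of the transition functions. Among the charts $\iota_1(U_i)$ the transitions are the transition maps $\phi_i\circ\phi_{i'}^{-1}$ of the atlas of $S_1\setminus\mathcal{C}_1$, hence smooth, and likewise among the $\iota_2(V_j)$; a chart of the first family never overlaps a chart of the second family, since $\iota_1(S_1\setminus\mathcal{C}_1)$ and $\iota_2(S_2\setminus\mathcal{C}_2)$ are disjoint. So the only new transitions involve $(\mathcal{O},\psi)$. On $\mathcal{O}\cap\iota_1(U_i)$, which lies in $\iota_1(B_1\setminus\mathcal{C}_1)$, the transition from $\phi_i\circ\iota_1^{-1}$ to $\psi$ equals
$$\psi\circ(\phi_i\circ\iota_1^{-1})^{-1}=\iota\circ(f,\mathrm{id})\circ g_1\circ\phi_i^{-1},$$
a composition of smooth maps: $\phi_i^{-1}$ by choice of the atlas, $g_1$ because the Collar Lemma provides it as a diffeomorphism, $(f,\mathrm{id})$ because $f$ is a diffeomorphism, and $\iota$ because it is a smooth embedding; the inverse $\phi_i\circ g_1^{-1}\circ(f^{-1},\mathrm{id})\circ\iota^{-1}$ is smooth by the same reasoning. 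On $\mathcal{O}\cap\iota_2(V_j)\subset\iota_2(B_2\setminus\mathcal{C}_2)$ the transition is, analogously, $\iota\circ g_2\circ\psi_j^{-1}$ together with its inverse, again smooth. This exhausts all overlaps, so the displayed collection is a smooth atlas and $S_1\cup_f S_2$ is a surface with boundary.

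The step I expect to be the real obstacle is the analysis around the glued curve: showing that the piecewise map $\psi$ is well defined and a homeomorphism, and that its overlaps with the charts coming from $S_1$ and $S_2$ are smooth. Everything there hinges on the precise normalisations built into the construction — that the Collar Lemma is applied so that $g_1$ restricts on $\mathcal{C}_1$ to $x\mapsto(x,0)$ and $g_2$ on $\mathcal{C}_2$ to $y\mapsto(y,0)$, with opposite sign conventions $(-1,0]$ versus $[0,1)$ on the collar parameter, and that the diffeomorphism $f$ is inserted into the first branch as $(f,\mathrm{id})$ — which is exactly what makes the two descriptions of $\psi$ match along $\mathcal{C}$ and glue into a diffeomorphism. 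A minor additional bookkeeping point is the case split according to whether $\mathcal{C}$ is a circle or an interval, which affects only the target of $\iota$ (an open subset of $\mathbb{R}^2$ versus of $H^+$), hence whether the new chart is an interior or a boundary chart, and leaves the smoothness computation untouched.
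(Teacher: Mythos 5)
Your proposal is correct and follows essentially the same route as the paper: the paper treats the proposition as established by the preceding construction (atlases away from the gluing curve plus the collar chart $(\mathcal{O},\psi)$ built from $g_1$, $g_2$, $(f,\mathrm{id})$ and $\iota$), and you verify exactly that construction, filling in the routine checks (well-definedness of $\psi$ on $\mathcal{C}$, openness of its image, smoothness of the overlaps with the old charts) that the paper leaves implicit.
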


\begin{bf}Remark:\end{bf}
Given two oriented surfaces with boundary $S_1$, $S_2$ we can give a unique orientation to $S_1\cup_fS_2$ compatible with the orientation of $S_1$ and $S_2$ using an orientation reversing diffeomorphism\\ $f:\mathcal{C}_1\rightarrow\mathcal{C}_2$, where $\mathcal{C}_1$ and $\mathcal{C}_2$ are connected components of $\partial S_1$ and $\partial S_2$ respectively. 

\section{Surfaces as combinatorial objects}
\subsection{Ribbon graphs}
In an informal way, a graph is a collection of points called vertex which are joined by some lines called edges as in the Figure 6. If we choose an orientation on the edges, we say that the graph is oriented or directed. The following definition gives us a formal description of these objects.
\begin{definition}
An oriented graph $\Gamma$ is a triple $\Gamma=(V,E,i)$, where $V$ is a finite set $V=\{v_1,\ldots,v_n\}$ whose elements are called vertex and $E$ is a finite set whose elements are called edges and a map $i:E\rightarrow V\times V$ with $i(e)=(e_-,e_+)$, where $e_-$ is the origin of the edge $e$ and $e_+$ is the end of the edge $e$.\\

\begin{figure}[h]
\includegraphics{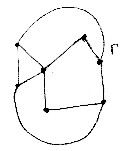}
\centering
\caption{Example of a graph $\Gamma$.}
\end{figure}

We say that an edge and a vertex are incident if the vertex is on the image of the edge under the map $i$. The quantity $a_{jk}=|i^{-1}(v_j,v_k)|$ gives us the number of edges that connect two vertex $v_j$ and $v_k$.\\

The degree or valence of a vertex $v_j$ is the number given by
$$deg(v_j)=\sum_{k\neq j} a_{jk}+2a_{jj}$$
which is the number of edges incident to $v_j$. A loop, this is an edge whit just one incident vertex, contributes twice to the degree.  
\end{definition}
 
\begin{definition}
The edge refinement of an oriented graph $\Gamma=(V,E,i)$ is the graph $\Gamma_E=(V\sqcup V_E, E\sqcup E, i_E)$ with a point added at each edge as a degree 2 vertex, where $V_E$ denotes the set of this vertices. The set of vertices of $\Gamma_E$ is $V\sqcup V_E$ and the set of edges is $E\sqcup E$. The incidence relation is described by the map $i_E: E\sqcup E\rightarrow V\times V_E$ because each edge of $\Gamma_E$ connects exactly one vertex of V to a vertex of $V_E$ and an edge of $\Gamma_E$ is called a half-edge.
\end{definition}

For each vertex $v\in V$ of $\Gamma_E$ the set $i_E^{-1}(\{v\}\times V_E)$ consists of half-edges incident to $v$ and we have $deg(v)=|i_E^{-1}(\{v\}\times V_E)|$.\\

\begin{figure}[h]
\includegraphics{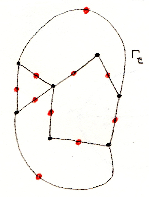}
\centering
\caption{Edge refinement $\Gamma_E$ of the graph $\Gamma$.}
\end{figure}

\begin{bf}Remark:\end{bf}
Let $e\in E$  be an edge of $\Gamma$, then $i(e)=(e_-,e_+)$. We denote by $e_0$ the vertex added on the edge $e$ in the refinement of $\Gamma$, i.e, $e_0\in V_E$ and we denote by $e^- $ and $e^+$ in $E\sqcup E$  the edges such that $i_E(e^-)=(e_-,e_0)$ and $i_E(e^+)=(e_0,e_+)$.\\

Let $\Gamma=(V, E, i)$ be an oriented graph and let $I:E\rightarrow E$ be an involution map with $I(e)=\bar{e}$ where $\overline{e_-}=e_+$ and $\overline{e_+}=e_-$. We call the pair $(e,\bar{e})\in E\times E$ a geometric edge of the graph $(\Gamma,I)$.\\

The geometric realization $|\Gamma|_I$ of the graph $(\Gamma,I)$ is a topological space $|\Gamma|_I=E\times[0,1]/\sim$ where $\sim$ is the equivalence relation generated by the relations
\begin{itemize}
\item $(e,t)\sim(\bar{e},1-t)$
\item If $e,f\in E$ with $e_-=f_-$ then $(e,0)\sim(f,0)$
\item If $e,f\in E$ with $e_+=f_+$ then $(e,1)\sim(f,1)$
\end{itemize}

\begin{bf} Remark:\end{bf}
The geometric realization of a graph $\Gamma=(V,E,i)$ not always can be drawn on a plane $\mathbb{R}^2$ without intersections, however, we can draw the geometric realization of a graph on $\mathbb{R}^3$. To do this let $p:\mathbb{R}\rightarrow\mathbb{R}^3$ be the function $p(t)=(t,t^2,t^3)$ and $C$ be the curve $C=\{p(t):t\in\mathbb{R}\}$. Now we only need to take any vertex $v_i\in V$ into the curve, and to see that the edges do not intersect we need only show that given four vertex on $C$ they are not coplanar. Now for any four points $v_1, v_2, v_3,v_4$ in $\mathbb{R}$, the volume of the tetrahedron $T$ formed by $p(v_i)\in C$ is proportional to a Vandermonde determinant:
$$6Vol(T)=det((p(v_2)-p(v_1))\cdot [(p(v_3)-p(v_1))\times(p(v_4)-p(v_1))])=det\left[\begin{array}{rccl}1&v_2&v_1^2&v_1^3\\ 1&v_2&v_2^2&v_2^3\\ 1&v_3&v_3^2&v_3^3\\ 1&v_4&v_4^2&v_4^3\end{array}\right]\neq0$$
this implies that any four points on $C$ are not coplanar. As a result, the edges of the tetrahedron $T$ intersect only the appropriate vertex. Now we take arbitrary $n$ distinct points $p_i$ in $C$. The argument above show that if we form the graph $\Gamma$ from this $n$ points, the edges intersect only in the appropriate vertex and this gives us an embedding of the given graph $\Gamma$ into $\mathbb{R}^3$.\\
Now using this fact we can project the graph into the plane in a such way that the edges cross over or under as in the figure 8.\\

\begin{figure}[h]
\includegraphics{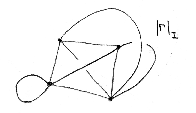}
\centering
\caption{Geometric realization of a graph whit under and over crossings.}
\end{figure}

In the same way we can make the geometric realization of the refinament of $\Gamma$.\\

Now, let's we define morphisms between graphs.

\begin{definition}
A traditional graph isomorphism $\phi=(\alpha,\beta)$ between two graphs $\Gamma_1=(V_1,E_1,i_1)$ and $\Gamma_2=(V_2,E_2,i_2)$ is a pair of bijective maps 
$$\alpha:V_1\rightarrow V_2,\ \beta:E_1\rightarrow E_2$$
 that preserves the incidence relation, i.e., the following diagram commutes 
 $$\xymatrix{ E_1 \ar[d]_\beta \ar[r]^{i_1} & V_1\times V_1 \ar[d]^{\alpha\times\alpha}\\ E_2\ar[r]_{i_2} &V_2\times V_2}$$
\end{definition}
\begin{theorem}
Let $(\Gamma_1, I)$ and $(\Gamma_2, I)$ be isomorphic graphs, then $|\Gamma_1|_I$ and $|\Gamma_2|_I$ are homeomorphics. 
\end{theorem}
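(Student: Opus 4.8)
The plan is to construct the homeomorphism directly from the isomorphism data and then pass to the quotient. Write $\phi=(\alpha,\beta)$ for the isomorphism, with bijections $\alpha:V_1\to V_2$ and $\beta:E_1\to E_2$ making the incidence square commute; I will moreover use that the isomorphism intertwines the involutions, i.e. $\beta\circ I=I\circ\beta$, equivalently $\beta(\bar e)=\overline{\beta(e)}$ for every $e\in E_1$. (This compatibility is part of what it means for $(\Gamma_1,I)$ and $(\Gamma_2,I)$ to be isomorphic, and it should be stated explicitly, since without it the conclusion can fail.)

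First I would define $F:E_1\times[0,1]\to E_2\times[0,1]$ by $F(e,t)=(\beta(e),t)$. As $E_1$ is discrete, $F=\beta\times\mathrm{id}_{[0,1]}$ is continuous, and it is a bijection with continuous inverse $(e',t)\mapsto(\beta^{-1}(e'),t)$. Let $q_i:E_i\times[0,1]\to|\Gamma_i|_I$ be the quotient maps.

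Second, I would verify that $F$ sends each of the three generating relations of $\sim$ on $E_1\times[0,1]$ into $\sim$ on $E_2\times[0,1]$: (i) the pair $(e,t)\sim(\bar e,1-t)$ is sent to $(\beta(e),t)$ and $(\beta(\bar e),1-t)=(\overline{\beta(e)},1-t)$, related by the first relation, by compatibility with $I$; (ii) if $e_-=f_-$, then applying $\alpha$ and commutativity of the incidence square gives $\beta(e)_-=\beta(f)_-$, so $(\beta(e),0)\sim(\beta(f),0)$; (iii) the case $e_+=f_+$ is identical with $+$ in place of $-$. Since a map that respects a set of generating relations respects the equivalence relation they generate, $q_2\circ F$ is constant on the fibres of $q_1$ and hence factors uniquely as $q_2\circ F=\overline F\circ q_1$; by the universal property of the quotient topology $\overline F:|\Gamma_1|_I\to|\Gamma_2|_I$ is continuous.

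Third, applying the same construction to $\phi^{-1}=(\alpha^{-1},\beta^{-1})$ yields a continuous map $\overline G:|\Gamma_2|_I\to|\Gamma_1|_I$; from $\beta^{-1}\circ\beta=\mathrm{id}_{E_1}$ and $\beta\circ\beta^{-1}=\mathrm{id}_{E_2}$ one reads off $\overline G\circ\overline F=\mathrm{id}$ and $\overline F\circ\overline G=\mathrm{id}$ already at the level of $E_i\times[0,1]$, hence on the quotients. Thus $\overline F$ is a homeomorphism. I do not expect a real obstacle: the argument is a diagram chase together with the universal property of quotients. The one point genuinely needing care — and worth flagging — is that the isomorphism must be assumed compatible with the involutions $I$; the secondary bookkeeping point is that well-definedness of $\overline F$ only has to be checked on the three listed generators of $\sim$, not on all $\sim$-equivalent pairs.
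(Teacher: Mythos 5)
Your proposal is correct and follows essentially the same route as the paper: both define $\beta\times\mathrm{id}$ on $E_1\times[0,1]$, check compatibility with the involution $I$ and with the incidence relations so that the map descends to the quotient, and conclude a homeomorphism. Your version is in fact slightly cleaner, since invoking the universal property of the quotient and building the inverse from $\phi^{-1}$ avoids the paper's shakier step of treating the quotient map $q_1$ as an open map, and you rightly flag that compatibility $\beta\circ I=I\circ\beta$ must be part of the hypothesis.
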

\begin{proof}
Let $\phi=(\alpha,\beta):\Gamma_1\rightarrow\Gamma_2$  be an isomorphism of graphs with $\beta(e)=f$ or $\beta(e)=\bar{f}$.\\
Now, consider $E_j$ with $j=1,2$ with the discrete topology, since $\phi$ is an isomorphism we have that $\beta:E_1\rightarrow E_2$ is an homeomorphism and we define the function
$$\beta\times id:E_1\times[0,1]\rightarrow E_2\times[0,1]$$
and we have the quotient maps $q_j:E_j\times[0,1]\rightarrow|\Gamma_j|_I$ with $j=1,2$. To see that this maps induces an homeomorphism on the geometric realization we just need to show that the following diagram commutes and the functions are continuous:
$$\xymatrix{ E_1\times[0,1] \ar[d]_{q_1}\ar[r]^{\beta\times id} & E_2\times[0,1]\ar[d]^{q_2}\\ |\Gamma_1|_I\ar@{-->}[r]& |\Gamma_2|_I}$$ 

To do this we define the function $|\phi|:|\Gamma_1|_I\rightarrow|\Gamma_2|_I$ which maps $[(e,t)]\mapsto[(\beta(e),t)]$ where $[\mbox{ }]$ denotes the equivalence class. Let's see that $|\phi|$ is well defined:
If we have $[(e,t)]$ then $(e,t)\sim(\bar{e},1-t)$ takes $(\bar{e},1-t)$. Now since $\beta(e)=f$ or $\beta(e)=\bar{f}$ then $\beta(\bar{e})=\bar{f}$ or $\beta(\bar{e})=f$. Suppose that $\beta(e)=f$, the other case is similar, then $\beta(\bar{e})=\bar{f}$ but $(\bar{f},1-t)\sim(f,t)=(\beta(e),t)$ therefore $[(\beta(\bar{e}),1-t)]=[(\beta(e),t)]$. Now since the diagram in the definition 2.3 commutes we have that $[(e,0)]\mapsto [(\beta(e),0)]$ and $[(e,1)]\mapsto[(\beta(e),1)]$, therefore the map is well defined and the diagram commutes.
Let's see that the function $|\phi|$ is continuous.
Let $U\subset|\Gamma_2|_I$ be an open subset, since $q_2$ is a quotient map, therefore continuous, we have that $q_2^{-1}(U)$ is open in $E_2\times[0,1]$ and $\beta\times I$ is continuous, then $(\beta\times id)^{-1}(q_2^{-1}(U))$ is an open subset of $E_1\times[0,1]$ and we have that $q_1$ is a quotient map, therefore an open map, then $q_1((\beta\times[0,1])^{-1}(q_2^{-1}(U)))$ is open in $|\Gamma_1|_I$ therefore $|\phi|$ is continuous and since $\beta\times id$ is an homeomorphism then $|\phi|$ is an homeomorphism.
\end{proof}

Now, let's consider graphs with more structure. To do this we need the notion of cyclic ordering on a finite set $S$.
\begin{definition}
A cyclic ordering in a finite set $S$ is a bijection $\sigma: S\rightarrow S$ such that for all $x\in S$ the orbit $\{\sigma^n(x)\}_{n\in\mathbb{Z}}=S$. Given $x\in S$ we will call $\sigma(x)$ the successor of $x$ and $\sigma^{-1}(x)$ the predecessor of $x$.
\end{definition} 

\begin{definition}[Ribbon Graph]
Let $(\Gamma,I)$ be a graph. For $v\in V$ the star of $v$ 
$$E_v=\{e\in E\ :\  v=e_-\}$$
is the set of edges starting from $v$. A ribbon graph is the graph $(\Gamma,I)$, together a cyclic ordering on the star of every vertex.
\end{definition}

\begin{figure}[h]
\includegraphics{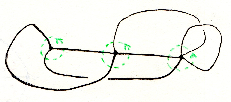}
\centering
\caption{Ribbon graph.}
\end{figure}

\begin{bf}Remark:\end{bf}\\
\begin{enumerate}
\item We can see the star of a vertex $v\in V$ as the set of semi-edges starting on $v$ when we consider the refinement of the graph and we denote this set as $E_v^*$.\\

\item We can consider isomorphisms of ribbon graphs. We just need to ask that preserve the cyclic ordering on each star, this is, that the following diagram commutes 
$$\xymatrix{ E_v^1 \ar[d]_{\sigma_v^1}\ar[r]^\beta&E_w^2\ar[d]^{\sigma_w^2} \\E_v^1\ar[r]_\beta& E_w^2}$$
where $\beta: E\rightarrow E$ is a bijection from the edges of the graphs 
$$\phi=(\alpha,\beta):\Gamma_1=(V_1,E_1,i_1,I)\rightarrow\Gamma_2=(V_2,E_2,i_2,I)$$ and $\sigma_v^1$ is the cyclic ordering on $E_v^1$ and $\sigma_w^2$ is the cyclic ordering on $E_w^2$.\\
This isomorphism induces an homeomorphism on the geometric realization of the ribbon graph preserving the cyclic ordering on it.
\end{enumerate}

For all $v\in V$ we can consider an embedding on $\mathbb{R}^2$ of the geometric realization of $E_v^*$, the orientation of $\mathbb{R}^2$ induces a cyclic ordering on each star in the following way: let's consider a circle with center on $v$ and radius one (we can suppose this because we can consider the length of each edge on $|E_v^*|_I$ as 1), since the circle gets an orientation from $\mathbb{R}^2$ and we can define the cyclic ordering from this orientation. (see figure 9).\\

Lets consider surfaces from the ribbon graphs. In order to do this we need first to embed the geometric realization of the graph in a open oriented surface (this is not always possible to do in the plane).

\begin{lemma}
Every ribbon graph can be embedded in an open oriented surface such that its cyclic ordering are induced from the orientation of the surface. 
\end{lemma}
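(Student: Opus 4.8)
The plan is to \emph{thicken} the geometric realization $|\Gamma|_I$ into a compact oriented surface with boundary, replacing vertices by disks and geometric edges by bands glued in accordance with the given cyclic orderings, and then to pass to the interior. Assume $\Gamma$ is connected (otherwise carry out the construction on each component). To each vertex $v\in V$ attach a copy $D_v$ of the closed unit disk in $\mathbb{R}^2$ with its standard orientation, and mark on $\partial D_v$ a family of pairwise disjoint closed sub-arcs $A_{v,e}$, one for each $e$ in the star $E_v$, placed so that, read counterclockwise along $\partial D_v$, the arc after $A_{v,e}$ is $A_{v,\sigma_v(e)}$ --- this uses only that $\sigma_v$ is a cyclic ordering of $E_v$. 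To each geometric edge $\{e,\bar e\}$ attach a band $R_e=[0,1]\times[-1,1]$ with its product orientation, gluing $\{0\}\times[-1,1]$ onto $A_{e_-,e}\subset\partial D_{e_-}$ and $\{1\}\times[-1,1]$ onto $A_{e_+,\bar e}\subset\partial D_{e_+}$ by homeomorphisms that reverse the chosen boundary orientations; let $\Sigma$ be the quotient.

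Next I would verify that $\Sigma$ is a compact oriented surface with boundary carrying $|\Gamma|_I$ as a spine. A point off the gluing arcs has a Euclidean neighborhood coming from the interior of a disk or of a band; along a gluing arc, a half-disk neighborhood from $D_v$ and one from $R_e$ fit together along their diameters into a full disk. Hence $\Sigma$ is a $2$-manifold with boundary, compact because $V$ and $E$ are finite, and connected because $\Gamma$ is. Since each $D_v$ and each $R_e$ is oriented and every identification is orientation-reversing on the glued arc, the local orientations agree on overlaps and patch to a global orientation of $\Sigma$. Collapsing each $D_v$ to its center together with the radii to the midpoints of the arcs $A_{v,e}$, and each band $R_e$ to its core $[0,1]\times\{0\}$, gives a deformation retraction of $\Sigma$ onto a subcomplex which --- orienting each band so that its $0$-end lies over $e_-$, which makes the relation $(e,t)\sim(\bar e,1-t)$ hold --- is canonically homeomorphic to $|\Gamma|_I$. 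Thus $|\Gamma|_I$ sits in $\Sigma$ as an embedded spine disjoint from $\partial\Sigma$.

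Finally I would put $S:=\Sigma\setminus\partial\Sigma$, an open oriented surface, so that $|\Gamma|_I\hookrightarrow\Sigma$ factors through an embedding $|\Gamma|_I\hookrightarrow S$. It remains to check that for each $v$ the cyclic order that the orientation of $S$ induces on $E_v$ --- obtained, as in the discussion preceding the lemma, by intersecting a small positively oriented circle about $v$ with the incident half-edges --- is exactly $\sigma_v$. Take that circle inside $D_v$, centered at its center, where the orientation of $S$ restricts to that of $D_v$; it crosses the core of the band attached along $A_{v,e}$ precisely where it meets the radius to the midpoint of $A_{v,e}$, so it meets the half-edges in the same cyclic order in which the arcs $A_{v,e}$ occur around $\partial D_v$, which is the $\sigma_v$-order by construction. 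The only genuine difficulty is organizational: writing the gluing homeomorphisms down concretely enough to see simultaneously that $\Sigma$ is a manifold across the seams, that its orientations patch, and that the induced rotation at each vertex is the prescribed one --- but once the arcs on $\partial D_v$ are arranged in $\sigma_v$-order and the band attachments are taken orientation-reversing, all three come out together.
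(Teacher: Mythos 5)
Your proof is correct and follows essentially the same route as the paper: thicken the graph into oriented planar pieces (you use vertex disks plus edge bands, the paper uses tubular neighborhoods of the stars glued at edge midpoints), glue with orientation-reversing identifications to get an oriented surface, and read off that the induced rotation at each vertex is $\sigma_v$ because the attaching arcs were placed in $\sigma_v$-order. Your write-up is more detailed than the paper's (manifold structure along the seams, orientation patching, the spine retraction, and passing to the interior), but the underlying construction is the same.
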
 
\begin{proof}
We construct the surface in the following way:
Let $v\in V$ be a vertex of ($\Gamma$,I) and $|E_v^*|_I$ the geometric realization of the star at $v$ in the refinement of ($\Gamma$,I), consider an embedding of $|E_v^*|_I$ in $\mathbb{R}^2$ and take a disc $D(v)$ with center on $v$ and radius 1 (we can consider this length for each edge in $|E_v^*|_I$). Now we take a tubular neighborhood $U_v\subset D(v)$ of $|E_v^*|_I$ with many boundary components as elements in $E_v^*$ labelled in the following way by the elements of $E_v$. Start with an arbitrary edge  $e\in|E_v^*|_I$, then the following boundary component is labelled by $\sigma_v(e)$ where $\sigma_v$ is the cyclic ordering of $|E_v^*|_I$ and so on.\\

\begin{figure}[h]
\includegraphics{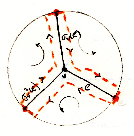}
\centering
\caption{Tubular neighborhood of the star and its orientation.}
\end{figure}

Since we do this for all vertex $v\in V$, using the gluing lemma, we now glue  each star with the other ones in the following way: for $e\in|E_v^*|_I$ and $e'\in|E_v'^*|_I$ we glue their boundary components if we have that $i_E(e)=(v,e_{0})$ and $i_E(e')=(e_{0},v')$, this is, that $e=e^+$ and $e'=e^-$ in the refinement of $(\Gamma, I)$ (see figure 11), and we make this gluing such that the orientations are reversed, then we get an oriented open surface. Since in each vertex the ordering of its corresponding star is preserved for the process of gluing, then the orientation of the surface is compatible with $\sigma_v$.
\end{proof}

\begin{figure}[h]
\includegraphics{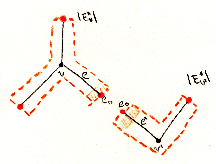}
\centering
\caption{Gluing of the stars.}
\end{figure}

The surface constructed in the proof of Lemma 2.7 is called the associated ribbon surface of the graph.\\

We can associate different ribbon graphs to a one graph all depends on the cyclic ordering in the edges (see figure 12) and we get different associated ribbon surfaces (see figure 13).

\begin{figure}[h]
\subfloat{\includegraphics{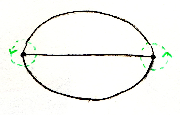}}
\subfloat{\includegraphics{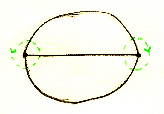}}
\centering
\caption{Ribbon graphs $\Gamma_1$ and $\Gamma_2$ from the same graph $\Gamma$.}
\end{figure}

In order to embed ribbon graphs into closed surfaces we need to close the holes in the associated ribbon surface. To do this we define the faces of the associated ribbon surface.
\begin{definition}
Let $(\Gamma=(V, E,i),I)$ be a ribbon graph. A face is an n-tuple $(e_1,\ldots,e_n)$ of edges such that $e_p^+=e_{p+1\mod n}^-$ and $\sigma_{e_p^+}(\bar{e_p})=e_{p+1\mod n}$ for all $1\leq p\leq n$ where $\sigma_{e_p^+}$ is the cyclic ordering on the star of $e_p^+$
\end{definition}

\begin{figure}[h]
\subfloat{\includegraphics{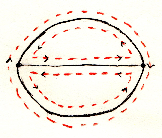}}
\subfloat{\includegraphics{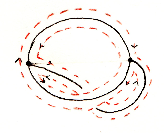}}
\centering
\caption{Associated ribbon surfaces of $\Gamma_1$ and $\Gamma_2$.}
\end{figure}

 The boundaries of this faces will be the boundaries of the disc we will be attaching at our associated ribbon surface.
 \begin{definition}
 A graph $\Gamma$ embedded in a surface $S$ is filling if each connected component of $S\backslash |\Gamma|_I$ is diffeomorphic to a disc.
 \end{definition} 
 
 Now we have:
 \begin{proposition}
 Every ribbon graph $(\Gamma,I)$ has a filling embedding into a compact oriented surface $S$. The connected components of $S\backslash|\Gamma|_I$ are in bijection with the faces of the associated ribbon surface of $\Gamma$.
 \end{proposition}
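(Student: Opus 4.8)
The plan is to bootstrap from the associated ribbon surface $\Sigma$ produced in the proof of Lemma 2.7: an oriented surface with boundary that contains an embedded copy of $|\Gamma|_I$, assembled by thickening each star $|E_v^*|_I$ to a tubular neighborhood $U_v$ and gluing the $U_v$ to one another along the half-edges by orientation-reversing identifications. First I would record two structural facts about $\Sigma$. (i) $\Sigma$ deformation retracts onto $|\Gamma|_I$: collapse each edge band to its core arc and each vertex disc $U_v$ to the point $v$. (ii) Consequently $|\Gamma|_I$ is a spine of $\Sigma$, so by the Collar Lemma the open set $\Sigma\setminus|\Gamma|_I$ is a disjoint union of half-open collars, one for each boundary component $\mathcal{C}_i$ of $\partial\Sigma$, each diffeomorphic to $\mathcal{C}_i\times(0,1]$ with $\mathcal{C}_i\times\{1\}=\mathcal{C}_i\subset\partial\Sigma$. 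Since $\Gamma$ is finite, $\partial\Sigma$ has finitely many components.

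Second, I would identify the boundary components of $\Sigma$ with the faces of Definition 2.8. Traversing a boundary component of $\Sigma$ in the direction prescribed by the orientation, one runs along the outer side of a band corresponding to some oriented half-edge $\bar{e_p}$, arrives at the vertex disc attached at $e_p^+$, and turns onto the next band. The labelling convention fixed in Lemma 2.7 — the boundary components of $U_v$ are labelled in the cyclic order $\sigma_v$, and the pieces are glued with orientation reversed — forces the band one turns onto to be the one labelled $\sigma_{e_p^+}(\bar{e_p})$, and that band starts at the vertex $e_p^+$; writing $e_{p+1}:=\sigma_{e_p^+}(\bar{e_p})$ gives exactly $e_{p+1}^-=e_p^+$ and $\sigma_{e_p^+}(\bar{e_p})=e_{p+1}$, the two conditions of Definition 2.8. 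Hence a boundary circuit of $\Sigma$ is precisely a face $(e_1,\dots,e_n)$, and conversely every face, being a $\sigma$-orbit of this turning rule, is traced out by a unique boundary component; this yields a bijection between the components of $\partial\Sigma$ and the faces.

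Third, I would close up the surface. For each boundary component $\mathcal{C}_i$ choose a closed disc $D_i$ and an orientation-reversing diffeomorphism $f_i\colon\partial D_i\to\mathcal{C}_i$, and glue $D_i$ to $\Sigma$ along $f_i$ using the gluing construction of Subsection 1.3 (Proposition 1.6) together with the orientation-compatibility remark following it. Doing this for all (finitely many) $i$ produces $S:=\Sigma\cup_{\,\bigsqcup f_i}\bigsqcup_i D_i$, a compact oriented surface without boundary, carrying an orientation that restricts to the given one on $\Sigma$ and therefore still induces the cyclic orderings $\sigma_v$. The composite $|\Gamma|_I\hookrightarrow\Sigma\hookrightarrow S$ is the required embedding. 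To see it is filling, note that a connected component of $S\setminus|\Gamma|_I$ is obtained from a collar $\mathcal{C}_i\times(0,1]$ of $\Sigma\setminus|\Gamma|_I$ by attaching $D_i$ along $\mathcal{C}_i\times\{1\}=\partial D_i$; gluing a disc to the boundary circle of a half-open annulus gives a disc, so each component of $S\setminus|\Gamma|_I$ is diffeomorphic to a disc. Combined with the bijection of the second step, the components of $S\setminus|\Gamma|_I$ are in bijection with the faces.

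I expect the main obstacle to be the second step: turning the informal "walk along the boundary" into a rigorous verification that the turning rule at a vertex disc is exactly $\sigma_{e_p^+}(\bar{e_p})$ with the correct handedness. This is where one must pin down the orientation conventions used in Lemma 2.7 — the chosen orientation on each $U_v$, the labelling of its boundary arcs by $\sigma_v$, and the fact that the gluings reverse orientation — and check that it is precisely the orientation reversal that makes the \emph{successor} $\sigma$ (rather than its inverse) govern consecutive edges of a face. Once this bookkeeping is done, the remaining steps are routine surface topology.
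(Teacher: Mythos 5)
Your proposal is correct and follows essentially the same route as the paper: build the associated ribbon surface of Lemma 2.7, observe that its boundary components are circles corresponding to the faces, and cap each one with a disc to obtain the compact oriented surface with a filling embedding. The paper states this in a few lines and treats the boundary-component--face bijection as immediate, whereas you spell out the deformation-retraction, collar, and turning-rule bookkeeping that justifies it; the substance is the same.
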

\begin{proof}
 Since $\Gamma$ is a ribbon graph we have that the boundary components of the associated ribbon surface of $\Gamma$ define closed curves homeomorphic to a circles. We glue a disc for each of these curves. Therefore we thus obtain a closed surface and is followed immediately that the connected components of $S\backslash|\Gamma|_I$ are in bijection with the faces of the associated ribbon surface.$\blacksquare$
 \end{proof}

 We will see that the surface obtained from the proposition 2.10 is unique in a very strong sense. To see this, we will need the following basic fact from point-set topology.\\
 
 \begin{lemma}[Clutching Lemma]
 Let $X=U\cup V$ be a decomposition of a topological space $X$ in two closed sets $U$ and $V$. If $f_1:U\rightarrow Y$ and $f_2:V\rightarrow Y$ are continuous maps from $U$ and $V$ into some topological space $Y$ such that $f_1|_{U\cap V}=f_2|_{U\cap V}$ then the induced map $f:X\rightarrow Y$ is continuous.
 \end{lemma}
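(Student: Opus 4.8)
The plan is to verify continuity of the induced map $f$ via the closed-set criterion: a map between topological spaces is continuous if and only if the preimage of every closed set is closed. First I would check that $f$ is well-defined as a function at all: the rule $f(x)=f_1(x)$ for $x\in U$ and $f(x)=f_2(x)$ for $x\in V$ is unambiguous precisely because of the hypothesis $f_1|_{U\cap V}=f_2|_{U\cap V}$, and it is then immediate that $f|_U=f_1$ and $f|_V=f_2$, both of which are continuous by assumption.

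Next, fix an arbitrary closed subset $C\subseteq Y$. Using the decomposition $X=U\cup V$ one has
$$f^{-1}(C)=\bigl(f^{-1}(C)\cap U\bigr)\cup\bigl(f^{-1}(C)\cap V\bigr)=f_1^{-1}(C)\cup f_2^{-1}(C).$$
Since $f_1$ is continuous, $f_1^{-1}(C)$ is closed in the subspace $U$; and since $U$ is itself closed in $X$, a subset closed in $U$ is closed in $X$. The same reasoning applied to $f_2$ and $V$ shows $f_2^{-1}(C)$ is closed in $X$. Being a finite union of closed sets, $f^{-1}(C)$ is then closed in $X$, and since $C$ was arbitrary this proves $f$ continuous.

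There is essentially no serious obstacle here; the only point that must not be glossed over is the elementary fact that a set which is closed in a closed subspace is closed in the ambient space (equivalently, the inclusion of a closed subspace is a closed map) — this is exactly the place where the hypothesis that $U$ and $V$ are closed, rather than arbitrary, is used, and it is also why the argument works only for a finite closed cover. I would conclude by noting that the lemma is stated and proved in the form in which it will actually be invoked below, namely to splice together continuous maps defined on the two closed pieces into which the glued surface is cut by a neighborhood of the gluing curve.
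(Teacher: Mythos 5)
Your argument is correct and complete: well-definedness follows from the agreement of $f_1$ and $f_2$ on $U\cap V$, and continuity follows from the closed-set criterion together with the fact that a set closed in a closed subspace is closed in the ambient space, which is precisely where the hypothesis that $U$ and $V$ are closed (and that the cover is finite) enters. The paper itself gives no proof of this lemma, stating it only as a basic fact from point-set topology, so there is nothing to compare against; what you have written is the standard pasting-lemma argument and is exactly what one would supply here.
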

 
 Using this we can show the following result:
 
 \begin{proposition}
 Let $\Gamma_1\subset S_1$ and $\Gamma_2\subset S_2$ be filling ribbon graphs of compact oriented surfaces and let $\phi:\Gamma_1\rightarrow\Gamma_2$ be an isomorphism of ribbon graphs. Then $\phi$ induces an homeomorphism on the geometric realization $|\phi|:|\Gamma_1|_I\rightarrow|\Gamma_2|_I$ and this extends to an homeomorphism between $S_1$ and $S_2$.
 \end{proposition}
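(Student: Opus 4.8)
The plan is to assemble the extension from pieces already available: Theorem 2.4, together with item (2) of the Remark following Definition 2.6, supplies the homeomorphism on geometric realizations; Proposition 2.10 matches the complementary regions of each graph with its faces; and the Clutching Lemma (Lemma 2.11) will glue a family of disc homeomorphisms onto $|\phi|$ to produce a single global map. So the first assertion needs nothing new: $|\phi|:|\Gamma_1|_I\to|\Gamma_2|_I$, $[(e,t)]\mapsto[(\beta(e),t)]$, is exactly the homeomorphism produced there, and it respects the cyclic orderings of the stars. Next I would check that $\phi$ permutes faces. A face of a ribbon graph (Definition 2.8) is determined entirely by the incidence map, the involution $I$, and the cyclic orderings $\sigma_v$; since $\phi=(\alpha,\beta)$ is an isomorphism of ribbon graphs it intertwines all three (the square of Definition 2.3, the relation $\beta\circ I=I\circ\beta$, and the square for the $\sigma_v$ in the Remark after Definition 2.6), so $(e_1,\dots,e_n)$ is a face of $\Gamma_1$ iff $(\beta(e_1),\dots,\beta(e_n))$ is a face of $\Gamma_2$, and $\phi^{-1}$ induces the inverse bijection. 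By Proposition 2.10 the faces of $\Gamma_i$ are in bijection with the connected components of $S_i\setminus|\Gamma_i|_I$, each of which is an open disc; write $D_F\subset S_1$ for the disc of a face $F$ and $D_{\phi(F)}\subset S_2$ for the disc of $\phi(F)$.

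The step I expect to be the crux is extending $|\phi|$ over these discs compatibly with its values on their boundaries, and for this I would first install a local model for the surface near the graph. Take a regular (tubular) neighborhood $N_i$ of the compact $1$-complex $|\Gamma_i|_I$ in $S_i$; one checks that, compatibly with the inclusion of $|\Gamma_i|_I$ and with the cyclic orderings, $N_i$ is homeomorphic to the associated ribbon surface of $\Gamma_i$ constructed in Lemma 2.7. Granting this, each component of $S_i\setminus|\Gamma_i|_I$ caps exactly one boundary circle of $N_i$, and the attaching curve of that disc runs along $|\Gamma_i|_I$ precisely according to the edge-word of the corresponding face tuple (Definition 2.8). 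Hence there is a characteristic map $\chi_F:D^2\to\overline{D_F}\subset S_1$, a homeomorphism onto its image off the boundary circle, whose restriction to $S^1$ is, up to reparametrization, the face loop of $F$; likewise $\chi_{\phi(F)}$ for $\phi(F)$ in $S_2$. Because $|\phi|$ carries the face loop of $F$ onto that of $\phi(F)$ respecting its cyclic structure, $\chi_{\phi(F)}^{-1}\circ|\phi|\circ\chi_F$ is a well-defined homeomorphism $S^1\to S^1$, and I extend it to a homeomorphism $g_F:D^2\to D^2$ by coning from the centre (the Alexander trick; note that no orientation hypothesis is needed for this step). Transported back by $\chi_F$ and $\chi_{\phi(F)}$, $g_F$ gives a homeomorphism $\overline{D_F}\to\overline{D_{\phi(F)}}$ that agrees with $|\phi|$ on $\partial D_F$.

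Finally I would glue. Write $S_1=|\Gamma_1|_I\cup V$ with $V:=\overline{S_1\setminus|\Gamma_1|_I}$ the union of the closed complementary discs; both sets are closed in $S_1$ (the latter by compactness of $S_1$ and finiteness of the graph), with $|\Gamma_1|_I\cap V=\bigcup_F\partial D_F$, and the homeomorphisms $\overline{D_F}\to\overline{D_{\phi(F)}}$ fit together on $V$ since each restricts to $|\phi|$ on its boundary. Applying the Clutching Lemma (Lemma 2.11) to $f_1:=|\phi|$ on $|\Gamma_1|_I$ and $f_2:=$ the assembled disc maps on $V$, which agree on the intersection, yields a continuous map $f:S_1\to S_2$. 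It is a bijection: $|\phi|$ maps onto $|\Gamma_2|_I$, each piece maps bijectively onto its closed disc, the pieces match along boundaries, and $F\mapsto\phi(F)$ is a bijection of faces, so that $S_2=|\Gamma_2|_I\cup\bigcup_F\overline{D_{\phi(F)}}$ with the analogous intersection pattern. A continuous bijection from a compact space to a Hausdorff space is a homeomorphism, so $f:S_1\to S_2$ is the desired homeomorphism extending $|\phi|$. The one genuine obstacle is the regular-neighborhood identification at the start of the second paragraph — recognising that a neighborhood of the embedded graph inside the abstract surface $S_i$ really is the associated ribbon surface of Lemma 2.7, so that the complementary pieces are discs attached exactly along the faces; once that local model is in hand, the rest is bookkeeping together with the Alexander trick and the compactness argument.
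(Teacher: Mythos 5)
Your proposal is correct and follows essentially the same route as the paper: Theorem 2.4 produces $|\phi|$, the faces of $\Gamma_i$ are matched with the complementary discs via Proposition 2.10, the boundary homeomorphisms are extended over the discs by coning from the centre (the Alexander trick, exactly the paper's $\psi(re^{i\theta})=r|\phi|(e^{i\theta})$), and the Clutching Lemma assembles the global homeomorphism. The only cosmetic difference is that the paper decomposes $S_i$ into the closure of the associated ribbon surface and slightly shrunken closed discs meeting along embedded circles, whereas you cut along the graph itself and extend over the full closed complementary discs via characteristic maps; both devices address the same boundary-identification issue, and your write-up is, if anything, more explicit than the paper's about the face correspondence and the final continuous-bijection-on-a-compactum argument.
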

\begin{proof}
Since $\phi$ is an isomorphism of ribbon graphs, then by Theorem 2.4 this extends to an homeomorphism of the geometric realization $|\phi|:|\Gamma_1|_I\rightarrow|\Gamma_2|_I$. Let $S_{\Gamma_1}$ and $S_{\Gamma_2}$ be the associated ribbon surfaces of $\Gamma_1$ and $\Gamma_2$ respectively, then by the clutching lemma this homeomorphism extends to an homeomorphism of the closure of the associated ribbon surfaces.\\

Since $\Gamma_1$ and $\Gamma_2$ are filling we have that 
$$S_1\backslash|\Gamma_1|_I=\sqcup_{f\in F}D_f,\ S_2\backslash|\Gamma_2|_I=\sqcup_{g\in G}D'_{g}$$
where $D_f$ and $D'_g$ are discs. Then we have that
$$S_1=\overline{S_{\Gamma_1}}\cup(\sqcup_{f\in F}\bar{d_f}),\ S_2=\overline{S_{\Gamma_2}}\cup(\sqcup_{g\in G}\bar{d'_g})$$
where $d_f$ and $d'_g$ are slightly smaller discs than $D_f$ and $D'_g$ and
$$\overline{S_{\Gamma_1}}\cap(\sqcup_{f\in F}\bar{d_f})=\sqcup_{f\in F}\partial\bar{d_f}, \overline{S_{\Gamma_2}}\cap(\sqcup_{g\in G}\bar{d'_g})=\sqcup_{g\in G}\partial\bar{d'_g}$$
are the unions of discs. By the Clutching lemma is suffices to construct for each $f\in F$ an homeomorphism from $d_f$ to $d'_g$ which agrees on the boundary $\partial\bar{d_f}$ with the extension $|\phi|$ to $\overline{S_{\Gamma_1}}$. But if $\psi:\mathbb{S}^1\rightarrow\mathbb{S}^1$ is any homeomorphism of circles then there is an obvious way to extend it to the corresponding disc. In fact, each $x$ in the disc may be written in polar coordinates as $x=re^{i\theta}$ for some $r\in[0,1]$ and some $e^{i\theta}\in\mathbb{S}^1$. Then we can simply define $\psi(re^{i\theta})=r|\phi|(e^{i\theta})$ to obtain the desired homeomorphism.
\end{proof}

Combining proposition 2.10 and 2.12 we have
\begin{corollary}
For any ribbon graph $\Gamma$ there exists a unique compact oriented surface $S_\Gamma$ (up to homeomorphism) such that $\Gamma$ can be embedded as a filling ribbon graph into $S_\Gamma$.
\end{corollary}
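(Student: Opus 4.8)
The plan is to obtain the corollary as a formal consequence of the two preceding propositions: Proposition 2.10 supplies existence, and Proposition 2.12 supplies uniqueness. First I would invoke Proposition 2.10 applied to the given ribbon graph $\Gamma$: it produces a compact oriented surface, call it $S_\Gamma$, together with a filling embedding $\Gamma\hookrightarrow S_\Gamma$ whose complementary regions are discs in bijection with the faces of the associated ribbon surface. This disposes of the existence part immediately, with no additional argument.

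For uniqueness up to homeomorphism, suppose $\Gamma$ admits filling ribbon-graph embeddings into two compact oriented surfaces $S$ and $S'$. The identity map $\mathrm{id}_\Gamma$ is a ribbon-graph isomorphism from $\Gamma$ (viewed with the ribbon structure it carries as a subgraph of $S$) to $\Gamma$ (viewed with the ribbon structure it carries as a subgraph of $S'$): on vertices and edges it is a pair of identity bijections, it trivially preserves the incidence relation, and it preserves the cyclic ordering on each star. Applying Proposition 2.12 with $\phi=\mathrm{id}_\Gamma$ then gives a homeomorphism $S\to S'$ extending $|\mathrm{id}_\Gamma|=\mathrm{id}_{|\Gamma|_I}$. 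Hence any two compact oriented surfaces into which $\Gamma$ embeds as a filling ribbon graph are homeomorphic, so $S_\Gamma$ is well defined up to homeomorphism.

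The only point that deserves an explicit line is that the cyclic ordering on the stars of $\Gamma$ is intrinsic to the ribbon graph and independent of the chosen ambient surface; this is precisely what Lemma 2.7 and Proposition 2.10 guarantee, since there the embedding is constructed so that the induced orderings coincide with the prescribed ones. Once this is noted, there is no real obstacle — the corollary is purely formal. I would also record the stronger conclusion implicit in the construction: the homeomorphism $S\to S'$ can be taken to restrict to the identity on $|\Gamma|_I$, which is the ``very strong sense'' of uniqueness promised before Lemma 2.11.
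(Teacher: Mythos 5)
Your argument is correct and follows exactly the route the paper intends: the paper derives the corollary by combining Proposition 2.10 (existence of a filling embedding into a compact oriented surface) with Proposition 2.12 (any ribbon-graph isomorphism, here the identity, extends to a homeomorphism of the ambient surfaces). Your explicit spelling-out of the identity-isomorphism step and the intrinsic nature of the cyclic orderings is a faithful elaboration of the same proof.
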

Corollary 2.13 will enable us to classify surfaces up to homeomorphism and allows us to construct surfaces from ribbon graphs. The following proposition is a converse of this corollary.

 \begin{figure}[h]
\subfloat{\includegraphics{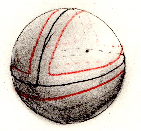}}
\subfloat{\includegraphics{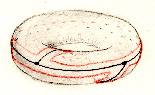}}
\centering
\caption{Closed surfaces from the ribbon graphs $\Gamma_1$ and $\Gamma_2$.}
\end{figure}

\begin{proposition}
Every compact oriented surface admits a filling ribbon graph.
\end{proposition}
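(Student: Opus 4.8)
The plan is to realize the required ribbon graph as the $1$-skeleton of a triangulation of $S$, with the cyclic orderings read off from the orientation. First I would invoke the classical fact that every compact smooth surface admits a smooth triangulation: a finite simplicial complex $K$ together with a homeomorphism $|K|\to S$ that restricts to a diffeomorphism (of manifolds with corners) on each closed simplex. Let $V$ be the set of $0$-simplices. For each $1$-simplex take its two orientations; this gives a finite set $E$ with a fixed-point-free involution $I$ swapping the two orientations of a simplex, and an incidence map $i\colon E\to V\times V$ recording (origin, end). Then $(\Gamma=(V,E,i),I)$ is a graph whose geometric realization $|\Gamma|_I$ is precisely the underlying space of the $1$-skeleton $K^{(1)}$, canonically embedded in $S$.

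Next I would equip $\Gamma$ with a ribbon structure coming from the orientation of $S$. Around a vertex $v$ pick a chart; the half-edges of $\Gamma_E$ at $v$ meet a small circle centred at $v$ in one point each, and the orientation of $S$ orients this circle, hence yields a cyclic permutation $\sigma_v$ of the star $E_v$. This is exactly the recipe given just before Lemma 2.7, so by that lemma it is also the cyclic order induced on $\Gamma$ by the orientation of any open surface into which $\Gamma$ sits orientation-compatibly. Two checks are needed: that $\sigma_v$ is independent of the chart (charts inducing the same orientation give the same cyclic order), and that $\sigma_v$ is a genuine cyclic ordering in the sense of Definition 2.5, i.e. a single cycle on $E_v$, which holds because the circle around $v$ is connected and meets each half-edge once. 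Thus $(\Gamma,I,\{\sigma_v\})$ is a ribbon graph.

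It then remains to see that this embedding is filling and that $\Gamma$ is connected. Each connected component of $S\setminus|\Gamma|_I$ is the interior of a $2$-simplex of $K$, hence diffeomorphic to a disc, so the embedding is filling in the sense of Definition 2.9. For connectedness: $S$ is obtained from $|\Gamma|_I$ by attaching the $2$-cells of $K$ along their boundary loops; the image of each such loop is connected and therefore lies in a single connected component of $|\Gamma|_I$, so attaching $2$-cells neither creates nor merges components, whence $\pi_0(|\Gamma|_I)\cong\pi_0(S)$ is a single point. Since by construction the cyclic orderings of $\Gamma$ are those induced by the orientation of $S$, the graph $\Gamma$ is a filling ribbon graph of $S$; moreover, by Corollary 2.13 one recovers $S\cong S_\Gamma$. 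An alternative, if the classification of surfaces is already available, is to take the standard polygon model — the $1$-skeleton is a wedge of circles, the cyclic order around the unique vertex is the boundary word, and the complement is a single disc — but the triangulation route is more self-contained.

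The main obstacle is the first step: the existence of a triangulation (equivalently, of a finite CW or handle decomposition in which the complementary $2$-cells are genuine discs) of a compact surface is a genuinely non-trivial classical theorem — Rad\'o's theorem in the topological category, Cairns--Whitehead in the smooth one — and may alternatively be obtained by applying Morse theory to a generic smooth function $S\to\mathbb{R}$ and reading off the graph from the resulting handle decomposition. Everything after that is bookkeeping; the only further care required is verifying that the cyclic order extracted from the orientation is well defined and literally coincides with the ``induced cyclic ordering'' appearing in Lemma 2.7, so that the embedding is an embedding of ribbon graphs and not merely of graphs.
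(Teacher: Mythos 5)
Your proposal is correct and follows essentially the same route as the paper, which also reduces the statement to the existence of a smooth triangulation (Cairns--Whitehead) and takes the $1$-skeleton, with cyclic orderings induced by the orientation, as the filling ribbon graph. Your write-up simply makes explicit the verifications (well-definedness of $\sigma_v$, filling, connectedness) that the paper leaves implicit.
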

The proof of this proposition is similar to find a triangulation of the surface. Then by a theorem of Cairns and Whitehead we have that every smooth manifold admits a triangulation, since every surface is a two dimensional smooth manifold we are done (See \cite{Cai} and \cite{Whi}).


\section{Classification of surfaces I: Existence}
By corollary 2.13 a convenient description of the compact oriented surfaces is given by their underlaying filling ribbon graph. We consider a family $\Gamma_g$, $g\geq 1$ of filling ribbon graphs given in the following way: Take g copies of the graph $\Gamma_1$ shown in the Figure 15, we call this graph a petal.

\begin{figure}[H]
\includegraphics{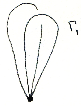}
\centering
\caption{A petal $\Gamma_1$}
\end{figure}

Then we glue g-copies of the petal by they vertex to get a graph $\Gamma_g$ with $g$ petals as shown in the following figure.\\ 

\begin{figure}[h]
\includegraphics{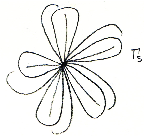}
\centering
\caption{$5$-petals ribbon graph $\Gamma_5$}
\end{figure}

Using definition 1.10 we can se that $\Gamma_1$ has one face and with some mental gymnastics we can see that the associated oriented closed surface $S_1:=S_{\Gamma_1}$ is a torus.\\

\begin{figure}[H]
\includegraphics{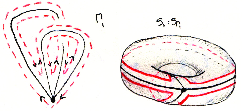}
\centering
\caption{Surface $S_1$}
\end{figure}

Similarly, each copy of $\Gamma_1$ in $\Gamma_g$ is a torus with one puncture and we glue two consecutive torus by their punctures. Thus, we get a surface $S_g:=S_{\Gamma_g}$ which is a handlebody with $g$ handles.\\

There is another useful description of $S_g$ as follows: Let $D_g=S_g\backslash |\Gamma_g|$ be the disc in $S_g$ which corresponds to the only face of $\Gamma_g$. Then $S_g$ is obtained by gluing the boundary of the disc $D_g$ in the following way: Let $a_i$, $b_i$ be two edges of the $i$-th copy of $\Gamma_1$ in $\Gamma_g$. Since each oriented edge of $\Gamma_g$, this is $a_i,\overline{a_i}, b_i,\overline{b_i}$, occurs only once in the boundary of $D_g$, then we can describe the boundary of $D_g$ by the series of edges given by
$$a_1,b_1,\overline{a_1},\overline{b_1},\ldots, a_g,b_g,\overline{a_g},\overline{b_g}$$

We can see this for the case $g=2$ as shown in the Figure 18

\begin{figure}[h]
\includegraphics{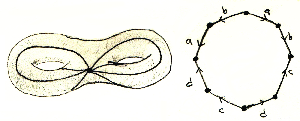}
\centering
\caption{The surface $S_2$ and its gluing polygon}
\end{figure}

It is convenient to define $S_0:= \mathbb{S}^2$ the two-sphere.\\

Later we see that given any filling ribbon graph we can deform it to any of the $\Gamma_g$ graphs. Now we can state the first part of the classification theorem.
 
\begin{theorem}
Every oriented compact surface $S$ is homeomorphic to one of the surfaces $S_g$ for $g\geq0$
\end{theorem}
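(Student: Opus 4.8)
The plan is to reduce an arbitrary filling ribbon graph of $S$ to one of the standard graphs $\Gamma_g$ by a finite sequence of elementary moves, each of which leaves the homeomorphism type of the associated surface unchanged, and then to invoke Corollary 2.13. \emph{Setup and moves.} By Proposition 2.14, $S$ carries a filling ribbon graph $\Gamma$, and by Corollary 2.13 we have $S\cong S_\Gamma$; since $S$ is connected, so is $\Gamma$, hence $\Gamma$ has a spanning tree $T$. I would record two moves. (i) \emph{Contracting a non-loop edge}: if $e$ is an edge with $e_-\neq e_+$, collapsing the arc $|e|_I$ to a point yields a ribbon graph $\Gamma/e$, where the cyclic orderings at $e_-$ and $e_+$ are amalgamated at the new vertex in the evident way, and $S_{\Gamma/e}\cong S_\Gamma$, because collapsing an arc contained in the interior of a surface does not change its homeomorphism type (a neighbourhood of the arc is a disc, a disc modulo an interior arc is again a disc, and one glues the collapsing map to the identity outside via the Clutching Lemma). (ii) \emph{Deleting an edge between two distinct faces}: if the two sides of a geometric edge $\{e,\bar{e}\}$ lie on two distinct faces, removing that edge merges the two discs into a single disc (they are glued along an arc of each boundary), again producing a filling ribbon graph with $S_{\Gamma\setminus e}\cong S_\Gamma$.

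\emph{Reduction to one vertex and one face.} I would contract the edges of $T$ one at a time by move (i); at each stage the next tree edge is still a non-loop edge, so the move applies, and after finitely many steps we reach a filling ribbon graph $\Gamma'$ with a single vertex and $S_{\Gamma'}\cong S$. Then, as long as $\Gamma'$ has at least two faces, its dual graph (a vertex per face, an edge per geometric edge, connected because $S$ is connected) has at least two vertices, hence contains an edge joining two distinct vertices; the corresponding edge of $\Gamma'$ borders two distinct faces, so move (ii) applies. Repeating until no more deletions are possible, we obtain a filling ribbon graph $\Gamma''$ with exactly one vertex and exactly one face, still with $S_{\Gamma''}\cong S$.

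\emph{The polygon and its normal form.} If $\Gamma''$ has no edge, its complement in $S_{\Gamma''}$ is a single disc attached along a point, so $S\cong S_{\Gamma''}=\mathbb{S}^2=S_0$. Otherwise $\Gamma''$ has $n\geq1$ edges, all loops, and its unique face is a $2n$-gon whose oriented sides are identified in pairs according to a cyclic word $w$ in the $n$ geometric edges; since $S_{\Gamma''}$ is orientable (it is built by the orientation-compatible gluings of Lemma 2.7 and Proposition 2.10), within each identified pair the two occurrences run in opposite directions, i.e.\ each edge occurs in $w$ once as $e$ and once as $\bar{e}$ (a pattern $e\cdots e$ would force a M\"obius band). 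Thus $S\cong S_{\Gamma''}$ is a surface obtained from a $2n$-gon by an orientable edge pairing, and it remains to put $w$ into normal form by the classical cut-and-paste algorithm, performed so that the resulting surface never changes: (a) cancel adjacent inverse pairs $e\bar{e}$, each step shrinking the polygon (if it degenerates we land on $S_0$); (b) by cutting along a diagonal and regluing, arrange that all polygon vertices are identified to a single point; (c) by further cut-and-paste moves bring each linked pair of letters into a consecutive block $ab\bar{a}\bar{b}$, absorbing unlinked pairs along the way — and no block $aa$ can occur, by orientability. The word becomes $a_1b_1\bar{a_1}\bar{b_1}\cdots a_gb_g\bar{a_g}\bar{b_g}$, which is exactly the boundary word of the single face of $\Gamma_g$ (with one vertex); hence $\Gamma''$ and $\Gamma_g$ carry the same combinatorial data and $S\cong S_{\Gamma''}\cong S_{\Gamma_g}=S_g$ by Corollary 2.13.

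\emph{Main obstacle.} The genuinely delicate part is not the final polygon reduction (which is the textbook classification of polygonal surfaces) but making moves (i) and (ii) precise as operations on \emph{ribbon graphs} — carrying the cyclic-ordering data correctly through the contractions, the deletions, and above all the cut-and-paste steps of stage (c) — and verifying at each move that the associated surface is genuinely unchanged, so that Corollary 2.13 may legitimately be applied at the end. The low-complexity degenerate cases (no edges, or the polygon collapsing during stage (a)) must also be handled separately so as to land on $S_0$.
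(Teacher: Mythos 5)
Your proposal is correct and follows essentially the same route as the paper: reduce a filling ribbon graph to one with a single vertex and a single face (edge contraction and deletion of edges bordering distinct faces), then bring the resulting polygon word into the normal form $a_1b_1\overline{a_1}\overline{b_1}\cdots a_gb_g\overline{a_g}\overline{b_g}$ by cut-and-paste and conclude via Corollary 2.13. The only differences are cosmetic: you perform the vertex reduction first (via a spanning tree) and the face reduction second (via the dual graph), and you justify these steps somewhat more explicitly than the paper does, whereas the paper phrases the final cut-and-paste as moves on the ribbon graph itself through its two ``linked pair'' claims.
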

\begin{proof}
 By proposition 2.14 we can choose a filling ribbon graph $\Gamma$ for the surface $S$. If $\Gamma$ doesn't have edges, then $S$ must be the surface $S_0$. Thus we may assume that $\Gamma$ has at least one edge. Now, we will deform the graph $\Gamma$ to obtain one of the graphs $\Gamma_g$ without changing the filling property in the process. In this way the theorem follows from corollary 2.13.\\
 
 First we deform the graph $\Gamma$ so that we see that the surface is obtained from gluing the boundary of a polygon, in the following way:
 \begin{enumerate}
 \item Eliminating faces: Let's assume that $\Gamma$ has more than one face. Then, there is a geometric edge $(e,\bar{e})$ such that $e$ and $\bar{e}$ are in different faces. Let $\Gamma'$ be the graph obtained by eliminating from $\Gamma$ the edges $e$ and $\bar{e}$, this is $\Gamma'=\Gamma\backslash\{e,\bar{e}\}$, then $\Gamma'$ is still filling and has one face less than $\Gamma$ as shown in the Figure 19.
 
 \begin{figure}[h]
\includegraphics{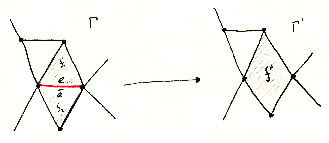}
\centering
\caption{Eliminating edges}
\end{figure}
 
If we iterate this process we get a filling ribbon graph with only one face.
\item Eliminating Vertices: Let $\Gamma=(V,E,I)$ be a filling ribbon graph and $\phi:|\Gamma|_I\hookrightarrow S$ be an embedding on the surface $S$. If $\Gamma$ has more than one vertex, lets say $e_-$ and $e_+$, then there is an edge $e$ joining them. Let $\Gamma'=(V',E',I)$ be a new filling ribbon graph and $\phi':|\Gamma'|_I\hookrightarrow S$ an embedding in $S$ where:
\begin{itemize}
\item The new set of vertices is obtained by crushing the vertex $e_-$ and $e_+$ in a single vertex $e_c$, thus $V'=(V\backslash\{e_-,e_+\})\cup\{e_c\}$.
\item The new set of edges is given by $E'=E\backslash\{e,\bar{e}\}$.
\item The map $\phi':|\Gamma'|\hookrightarrow S$ is defined by sending the new vertex into a point on the geometric image of $\phi(e)$ and is extended to the edges which previously started from $e_{\mp}$.
\end{itemize}

Again this process does not change the filling property and reduce the number of vertices by one without increasing the number of faces as in the following figure.

\begin{figure}[h]
\includegraphics{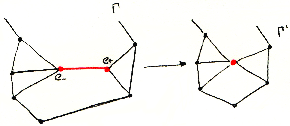}
\centering
\caption{Eliminating vertices}
\end{figure}
Iterating this process we obtain a graph with only one vertex.
 \end{enumerate}
Then by (1) and (2) we can assume that the graph $\Gamma$ has only one vertex and one face. Therefore we get that $S$ is obtained by gluing the sides of a polygon labelled by the edges of the graph and by definition of face, every oriented edge appears once, then the gluing is given by identifying $e$ and $\bar{e}$ with reversed orientation.\\

If there are no edges left, then $S$ is the surface $S_0$ and we are done. Thus, assume that $\Gamma$ has at least one edge. Let $(a,\bar{a})$ and $(b,\bar{b})$ geometric edges from $\Gamma$. We will call the pair $((a,\bar{a}),(b,\bar{b}))$ linked if their relative position is as in the following figure.

\begin{figure}[h]
\includegraphics{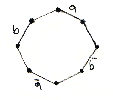}
\centering
\caption{Linked geometric edges}
\end{figure}
\begin{bf}Claim\end{bf}: Any geometric edge of $\Gamma$ is linked to at least other geometric edge.\\
Proof: Assume that $(a,\bar{a})$ is not linked to any other edge, then this edge would produce an additional face since $\Gamma$ is a Ribbon graph, but this contradicts the assumption that there is only one face.\\

The following claim let us rearrenge the labelling of the sides of the polygon in such way that we obtain a graph $\Gamma_g$.\\

\begin{bf}Claim:\end{bf} Given a linked pair $(a,\bar{a})$, $(b,\bar{b})$ of geometric edges, there is a way of rearrenging the labelling of the polygon without changing the resulting quotient space such that
\begin{itemize}
\item $a,b,\bar{a},\bar{b}$ appears as a subsequence of the sides of the polygon.
\item no subsequence of type $c,d,\bar{c},\bar{d}$ is destroyed during this process.
\end{itemize}        
Proof: First we add an edge to obtain two faces as is shown in the figure 22.
\begin{figure}[H]
\includegraphics{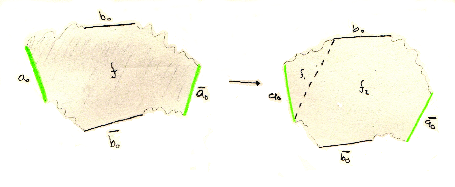}
\centering
\caption{Adding an edge}
\end{figure}
Then we erase in the graph the green edges, which has the effect on the polygon to glue together these two green lines in the red one (see the figure 23).
 \begin{figure}[h]
\includegraphics{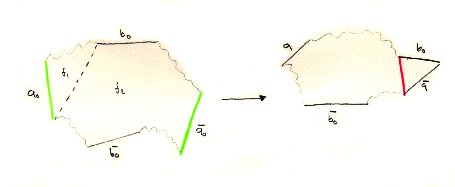}
\centering
\caption{eliminating the green edges}
\end{figure}

\newpage Then we repeat this procedure two more times as is depicted in the figure 24.
In the final picture we have created an additional subsequence of the form $a,b,\bar{a},\bar{b}$ which proves the claim.
\begin{figure}[H]
\includegraphics{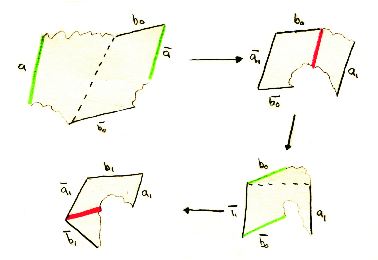}
\centering
\caption{eliminating the green edges}
\end{figure}

Then the resulting surface (which is homeomorphic to $S$) is thus brought into a new position such that all the edges of its polygon are of the form 
$$a_1,b_1,\overline{a_1},\overline{b_1},\ldots,a_g,b_g,\overline{a_g},\overline{b_g}$$
and we conclude the proof of the theorem.  
\end{proof}

\section{The fundamental group of a surface}

In this section, for the proofs of the results we refer the lector to the books \cite{Hat} and \cite{Agu}.

\subsection{The fundamental group of a topological space}

At this moment we have proved half of the classification theorem, in order to prove the other half, we need to know how to distinguish two surfaces $S_g$ and $S_{g'}$ when $g\neq g'$. In order to show this, we need an invariant that distinguishes the surfaces $S_g$  and $S_{g'}$ from each other.\\

This invariant is the fundamental group and we briefly recall its definition:\\
\begin{definition}
Let $X,Y$ be topological spaces
\begin{itemize}
\item A parametrised loop in $X$ based at $x_0\in X$ is a continuous map $\gamma:[0,1]\rightarrow X$ with $\gamma(0)=x_0=\gamma(1)$. We denote by $\Omega(X,x_0)$ the set of loops in $X$ based at $x_0$.
\item The composition of two based loops $\gamma_0, \gamma_1\in\Omega(X, x_0)$ is defined as:

$$(\gamma_0\ast\gamma_1)(t)=\left\{\begin{array}{rccl} \gamma_0(2t), & 0\leq t\leq \frac{1}{2}\\
\gamma_1(2t-1), & \frac{1}{2}\leq t\leq 1\end{array}\right.$$

\item Let $f_0, f_1:Y\rightarrow X$ be continuous maps which agrees on a subset $A\subset Y$. Then $f_0$ and $f_1$ are called homotopic relative to $A$, denoted by $f_0\simeq f_1 (rel A)$, if there exists a map $H:[0,1]\times Y\rightarrow X$ with
$$H(0,y)=f_0(y)$$
$$H(1,y)=f_1(y)$$
$$\forall a\in A,\ H(t,a)=f_0(a)=f_1(a)$$ 
the map $H$ is called homotopy relative to $A$. A space $X$ is called contractible if the identity map $id: X\rightarrow X$ is homotopic to a constant map $x\mapsto x_0$ for some $x_0\in X$.
\item Two based loops $\gamma_0,\gamma_1\in\Omega(X,x_0)$ are called homotopic if there is a homotopy relative to $\{0,1\}$. The set $\pi_1(X,x_0)=\Omega(X,x_0)/\sim$, where $\sim$ is the equivalence relation given by $\gamma_0\sim\gamma_1\Leftrightarrow\gamma_0\simeq\gamma_1(rel\{0,1\})$.
\end{itemize}
\end{definition}

\begin{theorem}
The set $\pi_1(X,x_0)$ is a group with the operation given by $\ast:\pi_1(X,x_0)\times\pi_1(X,x_0)\rightarrow\pi_1(X,x_0)$, $[\gamma_0]\ast[\gamma_1]=[\gamma_0\ast\gamma_1]$ where $[\mbox{ }]$ denotes a homotopy class and $\ast$ is the composition of loops. 
\end{theorem}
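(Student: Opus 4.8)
The plan is to verify the four group axioms for $(\pi_1(X,x_0),\ast)$ directly, producing in each case an explicit homotopy relative to $\{0,1\}$ and invoking the Clutching Lemma to guarantee continuity of the piecewise-defined maps. First I would check that $\ast$ descends to homotopy classes: given $\gamma_0\simeq\gamma_0'$ and $\gamma_1\simeq\gamma_1'$ rel $\{0,1\}$ via homotopies $H_0$ and $H_1$, the map $H\colon[0,1]\times[0,1]\to X$ given by $H(t,s)=H_0(t,2s)$ on $[0,1]\times[0,\tfrac12]$ and $H(t,s)=H_1(t,2s-1)$ on $[0,1]\times[\tfrac12,1]$ is continuous by the Clutching Lemma, since on the common edge $s=\tfrac12$ both formulas equal the constant value $x_0$; this $H$ is a homotopy rel $\{0,1\}$ from $\gamma_0\ast\gamma_1$ to $\gamma_0'\ast\gamma_1'$, so $[\gamma_0]\ast[\gamma_1]:=[\gamma_0\ast\gamma_1]$ is well defined.

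Next I would record the reparametrisation principle: if $\varphi\colon[0,1]\to[0,1]$ is continuous with $\varphi(0)=0$ and $\varphi(1)=1$, then $\gamma\circ\varphi\simeq\gamma$ rel $\{0,1\}$, via $H(t,s)=\gamma\bigl((1-t)\varphi(s)+ts\bigr)$, which is a well-defined map into $X$ because $[0,1]$ is convex. Associativity then follows at once: $(\gamma_0\ast\gamma_1)\ast\gamma_2$ and $\gamma_0\ast(\gamma_1\ast\gamma_2)$ are obtained from one another by precomposition with a piecewise-affine reparametrisation of $[0,1]$ fixing the endpoints, hence are homotopic rel $\{0,1\}$. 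Writing $c$ for the constant loop at $x_0$, both $c\ast\gamma$ and $\gamma\ast c$ are of the form $\gamma\circ\varphi$ for an appropriate such $\varphi$, so $[c]$ is a two-sided identity and serves as the neutral element.

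Finally, for inverses I would set $\bar\gamma(s):=\gamma(1-s)$ and exhibit the homotopy
$$H(t,s)=\begin{cases}\gamma\bigl(2s(1-t)\bigr),&0\le s\le\tfrac12,\\\gamma\bigl(2(1-s)(1-t)\bigr),&\tfrac12\le s\le1,\end{cases}$$
which is continuous by the Clutching Lemma (both formulas give $\gamma(1-t)$ at $s=\tfrac12$), satisfies $H(0,\cdot)=\gamma\ast\bar\gamma$ and $H(1,\cdot)=c$, and is constant equal to $x_0$ for $s\in\{0,1\}$; hence $[\gamma]\ast[\bar\gamma]=[c]$, and applying the same to $\bar\gamma$ in place of $\gamma$ gives $[\bar\gamma]\ast[\gamma]=[c]$ as well. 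Assembling the four verifications yields the group structure.

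I do not expect a genuine conceptual obstacle; the only real work is the routine bookkeeping of choosing the correct affine reparametrisations of $[0,1]$, and checking in each case that the piecewise formulas agree on the seam (so the Clutching Lemma applies) and that the homotopy is stationary on $\{0,1\}$ (so that it descends to $\pi_1(X,x_0)$). The mildest subtlety worth stating carefully is that the operation is only associative and unital up to homotopy, not on the nose, which is exactly why one passes to the quotient $\Omega(X,x_0)/\!\sim$ before asserting the group axioms.
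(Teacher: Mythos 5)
Your argument is correct and complete: the homotopies you write down do descend the composition to classes, the reparametrisation principle handles associativity and the identity, and the explicit homotopy for $\gamma\ast\bar\gamma\simeq c$ checks out on the seam and on $\{0,1\}$. The paper itself gives no proof of this theorem (it defers to \cite{Hat} and \cite{Agu}), and your write-up is exactly the standard argument found there, so there is nothing to reconcile beyond noting that your ``Clutching Lemma'' uses the pasting lemma on closed subintervals, as in Lemma 2.11 of the paper.
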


\begin{definition}
The group $(\pi_1(X,x_0),\ast)$ is called the fundamental group of the space $X$ based on $x_0$.
\end{definition}
Let $x_0,x_1\in X$ and let $\gamma:[0,1]\rightarrow X$ with $\gamma(0)=x_0$, $\gamma(1)=x_1$. Then the map 
$$\pi_1(X,x_0)\rightarrow\pi_1(X,x_1), [\alpha]\mapsto[\gamma\ast\alpha\ast\bar{\gamma}] $$
is an isomorphism, where the composition of paths is defined as composition of loops above and $\overline{\gamma}(t):=\gamma(1-t)$. The isomorphism type of the fundamental group of an arcwise connected space $X$ not depends on the base point.
\begin{definition}

An arcwise connected space $X$ is called simply-connected if $\pi_1(X,x_0)$ is trivial for some (and hence any) $x_0\in X$.
\end{definition} 
\begin{bf}Examples:\end{bf}
\begin{enumerate}
\item $\pi_1(x_0)=\{1\}$ where $x_0$ is a point.
\item $\pi_1(\mathbb{S}^1)=\mathbb{Z}$
\item $\pi_1(\mathbb{S}^1\times\mathbb{R})=\mathbb{R}$
\item $\pi_1(\mathbb{S}^n)=\{1\}$ for $n\geq 2$
\item $\pi_1(\infty)=F_2$, the free group on two generators
\end{enumerate}

\subsection{Coverings and the fundamental group}

Computing the fundamental group using only the definition is in many cases impossible. One common way to compute the fundamental group is by looking the space as a quotient of a simply-connected space. To do this we need the following notions.

\begin{definition}[Group action]
Let $G$ be a group and $S$ a nonempty set. Then $G$ is said to act on $S$ if there is function from $G\times S$ to $S$, usually denoted $(g,s)\mapsto gs$, such that for the identity $e\in G$, $es=s$ for all $s\in S$, and for all $g,h\in G$ and $s\in S$, $(gh)s=g(hs)$.
\end{definition}
\begin{bf}Remark\end{bf} The previous definition is for left actions, we can define right actions as follows: $S\times G\rightarrow S$ with $(s,g)\mapsto sg$ and with the same properties.\\

\begin{definition}
Suppose that $G$ is a group which acts on a set $S$. If $s\in S$, let $G(s)=\{gs | g\in G\}$. The set $G(s)$ is called the orbit of $s$. The stabilizer of $s$ is the subset $G_s=\{g\in G | gs=s\}$
\end{definition}

\begin{definition}
Let $\Gamma$ be a discrete group which acts on a space $M$. Then the action is called free if it has no fixed points, in other words the stabilizer $\Gamma_x$ is trivial for all $x\in M$. The action is properly discontinuous  if for any compact set $K\subset M$ the set
$$\Gamma_K=\{\gamma\in\Gamma | \gamma K\cap K\neq\emptyset\}$$
is finite.
\end{definition}
The reason of why we need these tools is the following 

\begin{theorem}
Let $\Gamma$ act on a space $X$ proper discontinuously. Then $X$ is Housdorff if and only if $X/\Gamma$ is Housdorff. 
\end{theorem}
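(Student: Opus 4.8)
The plan is to establish the two implications separately. A preliminary observation used throughout is that the quotient projection $p\colon X\to X/\Gamma$ is continuous, surjective and an \emph{open} map: for open $U\subseteq X$ one has $p^{-1}(p(U))=\bigcup_{\gamma\in\Gamma}\gamma U$, which is open since each $\gamma$ acts as a homeomorphism, so $p(U)$ is open by definition of the quotient topology. Thus separating open sets downstairs pull back to separating open sets upstairs, and disjoint \emph{saturated} open sets upstairs push down to disjoint open sets downstairs.

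For ``$X/\Gamma$ Hausdorff $\Rightarrow$ $X$ Hausdorff'', take $x\neq y$ in $X$. If $p(x)\neq p(y)$, choose disjoint open $A\ni p(x)$, $B\ni p(y)$ in $X/\Gamma$; then $p^{-1}(A)$ and $p^{-1}(B)$ are disjoint open neighbourhoods of $x$ and $y$ by continuity of $p$. If $p(x)=p(y)$, then $y=\gamma_0 x$ for some $\gamma_0\in\Gamma$ with $\gamma_0 x\neq x$, i.e. $\gamma_0$ is not in the stabiliser of $x$; here one uses proper discontinuity to produce an open neighbourhood $U$ of $x$ with $\gamma_0 U\cap U=\emptyset$, so that $U$ and $\gamma_0 U$ are disjoint open neighbourhoods of $x$ and $y=\gamma_0 x$.

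For the converse ``$X$ Hausdorff $\Rightarrow$ $X/\Gamma$ Hausdorff'', which I expect to carry the real weight, take distinct orbits $\Gamma x\neq\Gamma y$, so that $\Gamma x\cap\Gamma y=\emptyset$. Choose neighbourhoods $U\ni x$ and $V\ni y$ whose closures $\overline{U},\overline{V}$ are compact, and put $K=\overline{U}\cup\overline{V}$. By proper discontinuity the set $F=\{\gamma\in\Gamma:\gamma K\cap K\neq\emptyset\}$ is finite. For each $\gamma\in F$ we have $\gamma x\in\Gamma x$ while $y\in\Gamma y$, hence $\gamma x\neq y$, so by the Hausdorff property of $X$ we may pick disjoint open sets $P_\gamma\ni\gamma x$ and $Q_\gamma\ni y$. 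Now set $U'=U\cap\bigcap_{\gamma\in F}\gamma^{-1}P_\gamma$ and $V'=V\cap\bigcap_{\gamma\in F}Q_\gamma$; these are still open neighbourhoods of $x$ and $y$. One checks that $\gamma U'\cap V'=\emptyset$ for every $\gamma\in\Gamma$: for $\gamma\in F$ because $\gamma U'\subseteq P_\gamma$ and $V'\subseteq Q_\gamma$, and for $\gamma\notin F$ because $\gamma U'\cap V'\subseteq\gamma\overline{U}\cap\overline{V}\subseteq\gamma K\cap K=\emptyset$. From this it follows that $\Gamma U'$ and $\Gamma V'$ are disjoint (an overlap $\gamma_1 U'\cap\gamma_2 V'$ would produce a point of $U'\cap(\gamma_1^{-1}\gamma_2)V'$), hence $p(U')$ and $p(V')$ are disjoint open neighbourhoods of $\Gamma x$ and $\Gamma y$, and $X/\Gamma$ is Hausdorff.

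The main obstacle is the bookkeeping in this last argument: one must pass from proper discontinuity to a \emph{finite} set $F$ of obstructing group elements, which forces choosing neighbourhoods with compact closure (so local compactness of $X$ enters here, which is harmless since all the spaces in sight are manifolds), and then one peels off these finitely many elements one at a time using Hausdorffness of $X$ before finally saturating and pushing down through the open map $p$. Once the finite shrinking is set up, the remaining verifications are routine.
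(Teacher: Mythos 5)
Your second implication ($X$ Hausdorff $\Rightarrow$ $X/\Gamma$ Hausdorff) is essentially right: openness of the quotient map, finiteness of $F=\Gamma_K$ for the compact set $K=\overline{U}\cup\overline{V}$, shrinking by the finitely many separating pairs $P_\gamma,Q_\gamma$, and saturating, is the standard argument. Note only that choosing $U,V$ with compact closure is an extra hypothesis (local compactness) not present in the statement; you flag it, and it is harmless in the paper's setting of manifolds and locally finite complexes, but it should be stated. (The paper itself gives no proof of this theorem; it defers to \cite{Hat} and \cite{Agu}, so there is nothing to compare your route against.)

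The first implication, however, contains a genuine gap. You claim that proper discontinuity ``produces an open neighbourhood $U$ of $x$ with $\gamma_0 U\cap U=\emptyset$.'' With the paper's definition (for every compact $K$ the set $\{\gamma:\gamma K\cap K\neq\emptyset\}$ is finite) this is not a consequence of the hypotheses: it is precisely the stronger ``wandering''/covering-space-action property, and the usual way to derive it from the compact-set definition uses local compactness \emph{and} the Hausdorffness of $X$ to separate $x$ from the finitely many points $\gamma x$ --- which is circular here, since Hausdorffness of $X$ is what you are proving. Worse, the needed neighbourhood may simply fail to exist under the stated hypotheses: let $X$ be the line with two origins $0_1,0_2$ and let $\mathbb{Z}/2$ act by $t\mapsto -t$ on $\mathbb{R}\setminus\{0\}$ while swapping the two origins. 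This action is free and properly discontinuous in the paper's sense (the group is finite), the quotient is homeomorphic to $[0,\infty)$ and hence Hausdorff, yet $X$ is not Hausdorff, and every neighbourhood $U$ of $0_1$ meets $\sigma U$. So this direction cannot be proved as written; it requires strengthening ``properly discontinuous'' to the covering-space form (each $x$ has $U$ with $\gamma U\cap U=\emptyset$ for all $\gamma\neq e$), under which your argument does go through and is exactly the statement that a covering space of a Hausdorff space is Hausdorff. As it stands, the one-line appeal to proper discontinuity hides the entire difficulty of (and indeed a counterexample to) that implication.
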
 
Now we introduce some basic notions of covering spaces.

\begin{definition}
A covering space of a space $X$ is a space $\tilde{X}$ together with a map $p:\tilde{X}\rightarrow X$ such there is an open cover $\{U_\alpha\}_{\alpha\in I}$ of $X$ such that for each $\alpha$, $p^{-1}(U_\alpha)$ is a disjoint union of open sets in $\tilde{X}$, each of which is mapped by $p$ homeomorphically onto $U_\alpha$	
\end{definition}

\begin{definition}
Given a covering $p:\tilde{X}\rightarrow X$, a lifting of a map $f:Y\rightarrow X$ is a map $\tilde{f}:Y\rightarrow \tilde{X}$ such that $f=p\circ\tilde{f}$.
\end{definition}

\begin{proposition}[Homotopy lifting property]
Given a covering space $p:\tilde{X}\rightarrow X$, a homotopy $f_t:Y\times[0,1]\rightarrow X$ and a lifting $\tilde{f_0}:Y\rightarrow\tilde{X}$ of $f_0$, there is a unique homotopy $\tilde{f_t}:Y\times[0,1]\rightarrow\tilde{X}$ that lifts $f_t$.
\end{proposition}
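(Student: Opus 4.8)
The plan is to deduce the statement from the special case $Y=\{\ast\}$, which is exactly the unique lifting of paths, and then to glue the fibrewise lifts together.

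So I would first prove path lifting: given a path $\gamma:[0,1]\to X$ and a point $\tilde x_0\in p^{-1}(\gamma(0))$, there is a unique $\tilde\gamma:[0,1]\to\tilde X$ with $p\circ\tilde\gamma=\gamma$ and $\tilde\gamma(0)=\tilde x_0$. For existence, pull back the open cover $\{U_\alpha\}$ from the definition of covering space to the open cover $\{\gamma^{-1}(U_\alpha)\}$ of $[0,1]$, and use compactness to get a partition $0=t_0<t_1<\cdots<t_m=1$ with $\gamma([t_{j-1},t_j])\subset U_{\alpha_j}$ for each $j$. Define $\tilde\gamma$ inductively on the intervals $[t_{j-1},t_j]$: assuming $\tilde\gamma(t_{j-1})$ is already defined, it lies in exactly one of the disjoint open subsets $\tilde U$ of $p^{-1}(U_{\alpha_j})$ that $p$ maps homeomorphically onto $U_{\alpha_j}$, and we set $\tilde\gamma|_{[t_{j-1},t_j]}:=(p|_{\tilde U})^{-1}\circ\gamma$. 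The Clutching Lemma, applied to the finite closed cover $\{[t_{j-1},t_j]\}$ of $[0,1]$, shows $\tilde\gamma$ is continuous. For uniqueness, if $\tilde\gamma_1$ and $\tilde\gamma_2$ are two such lifts then $A=\{t:\tilde\gamma_1(t)=\tilde\gamma_2(t)\}$ is nonempty and is both open and closed in $[0,1]$: near a point of $A$ both lifts take values in the same sheet over the relevant $U_\alpha$, so they coincide there since $p$ is injective on that sheet; near a point not in $A$ they take values in two distinct, hence disjoint, sheets over a common $U_\alpha$, so they stay distinct. By connectedness $A=[0,1]$.

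Now for general $Y$: for each $y\in Y$ let $t\mapsto\tilde f_t(y)$ be the unique lift of the path $t\mapsto f_t(y)$ taking the prescribed value $\tilde f_0(y)$ at $t=0$, which exists by the previous paragraph. This defines a set map $\tilde f:Y\times[0,1]\to\tilde X$ with $p\circ\tilde f=f$ and with $\tilde f_0$ as given, and it is plainly the only possible such map, so the uniqueness assertion is already done. It remains to show $\tilde f$ is continuous. Fix $(y_0,s_0)\in Y\times[0,1]$. Covering the compact set $\{y_0\}\times[0,1]$ by product neighbourhoods $N\times(a,b)$ whose $f$-image lies in some $U_\alpha$, I would extract a finite subcover and thereby obtain one neighbourhood $N$ of $y_0$ together with a partition $0=t_0<\cdots<t_m=1$ such that $f(N\times[t_{j-1},t_j])$ lies in some $U_{\alpha_j}$. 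Then I would build a lift $g$ of $f$ on $N\times[0,1]$ strip by strip: starting from $g=\tilde f_0$ on $N\times\{0\}$, at the $j$-th stage first shrink $N$ (finitely many times, hence harmless) so that the already-defined $g$ sends $N\times\{t_{j-1}\}$ into a single sheet $\tilde U$ over $U_{\alpha_j}$ — possible since the partial lift is continuous and $g(y_0,t_{j-1})$ lies in one sheet — and then set $g:=(p|_{\tilde U})^{-1}\circ f$ on $N\times[t_{j-1},t_j]$. The Clutching Lemma once more gives continuity of $g$ on $N\times[0,1]$. Since $g$ agrees with $\tilde f_0$ on $N\times\{0\}$, for each fixed $y\in N$ the path $t\mapsto g(y,t)$ lifts $t\mapsto f_t(y)$ and starts at $\tilde f_0(y)$, so $g(y,\cdot)=\tilde f(y,\cdot)$ by the uniqueness of path lifting; hence $g=\tilde f$ on the neighbourhood $N\times[0,1]$ of $(y_0,s_0)$, and $\tilde f$ is continuous there. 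As $(y_0,s_0)$ was arbitrary, $\tilde f$ is continuous, and $t\mapsto\tilde f_t$ is the desired homotopy.

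The main obstacle is the bookkeeping in the local construction — especially the repeated shrinking of $N$ so that each partially-built lift stays inside a single sheet of $p^{-1}(U_{\alpha_j})$, and checking that the strip-by-strip definitions really agree on the overlapping slices $N\times\{t_j\}$ so that the Clutching Lemma applies (they do, because $p$ is injective on the sheet common to both strips). Everything else — path lifting, its uniqueness from connectedness, and the passage from the local lifts to the global one — is then formal.
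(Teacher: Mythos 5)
Your argument is correct: the fibrewise path lifting (via compactness of $[0,1]$, the Clutching Lemma, and the open-closed agreement-set argument for uniqueness), followed by the local strip-by-strip construction on $N\times[0,1]$ and the identification of that local lift with the fibrewise one, is exactly the standard proof of this proposition. The paper offers no proof of its own here—it defers to \cite{Hat} and \cite{Agu}—and your write-up reproduces the argument of the cited source (Hatcher, Proposition 1.30), so it matches the intended approach.
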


\begin{proposition}
The induced map $p_\ast:\pi_1(\tilde{X},\tilde{x_0})\rightarrow\pi_1(X,x_0)$ is injective. The image subgroup $p_\ast(\pi_1(\tilde{X},\tilde{x_0}))$ consists of homotopy classes of loops in $X$ based at $x_0$ that lift to loops in $\tilde{X}$ based at $\tilde{x_0}$.
\end{proposition}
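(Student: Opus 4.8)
The plan is to derive both assertions from the homotopy lifting property stated above, together with its special case $Y = \{\mathrm{pt}\}$, which gives unique lifting of paths: any path in $X$ has a unique lift beginning at a prescribed point over its initial point. The one auxiliary fact I would record first is that a constant path lifts only to a constant path --- the constant path at $x_0$ clearly has the constant path at $\tilde{x_0}$ as a lift, and by uniqueness of path lifting this is its only lift starting at $\tilde{x_0}$. Everything else is endpoint bookkeeping.

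For injectivity of $p_\ast$, I would take $\tilde\gamma \in \Omega(\tilde X,\tilde x_0)$ with $p_\ast[\tilde\gamma] = 1$, so there is a homotopy $f_t : [0,1]\times[0,1]\to X$ relative to $\{0,1\}$ with $f_0 = p\circ\tilde\gamma$ and $f_1$ the constant loop at $x_0$. Applying the homotopy lifting property with initial lift $\tilde f_0 = \tilde\gamma$ yields a lifted homotopy $\tilde f_t$. The restrictions $t\mapsto \tilde f_t(0)$ and $t\mapsto \tilde f_t(1)$ are lifts of the constant path at $x_0$ starting at $\tilde x_0$, hence constant by the auxiliary fact; thus $\tilde f_t$ is a homotopy relative to $\{0,1\}$. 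Moreover $\tilde f_1$ is a lift of the constant loop at $x_0$ starting at $\tilde x_0$, hence is itself the constant loop. Therefore $\tilde\gamma \simeq \tilde f_1$ relative to $\{0,1\}$ is null-homotopic, i.e. $[\tilde\gamma] = 1$.

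For the description of the image, first if $[\gamma] = p_\ast[\tilde\gamma]$ for some loop $\tilde\gamma$ at $\tilde x_0$, then $\gamma \simeq p\circ\tilde\gamma$ relative to $\{0,1\}$; lifting this homotopy beginning from $\tilde\gamma$ exactly as above, and using uniqueness of path lifting, shows that the unique lift of $\gamma$ starting at $\tilde x_0$ is homotopic to $\tilde\gamma$ --- in particular it is a loop based at $\tilde x_0$. Conversely, if $\gamma$ lifts to a loop $\tilde\gamma \in \Omega(\tilde X,\tilde x_0)$, then $p\circ\tilde\gamma = \gamma$ by definition of a lift, so $[\gamma] = p_\ast[\tilde\gamma]$ lies in the image. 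Hence $p_\ast(\pi_1(\tilde X,\tilde x_0))$ is exactly the set of classes of loops at $x_0$ whose lift through $\tilde x_0$ is again a loop.

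The only place that is not purely formal is the endpoint bookkeeping inside the lifted homotopies, and that is the step I would expect a careful reader to want spelled out; once the ``constant paths lift to constant paths'' observation is available it dissolves, and no topological input beyond the homotopy lifting property is required.
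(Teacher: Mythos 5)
Your argument is correct and complete: the paper itself does not prove this proposition (it defers to \cite{Hat}), and your proof is exactly the standard one found there --- lift the null-homotopy (resp.\ the homotopy to $p\circ\tilde\gamma$) via the homotopy lifting property, use the case $Y=\{\mathrm{pt}\}$ to get unique path lifting, and observe that constant paths lift to constant paths to handle the endpoint bookkeeping. Nothing further is needed.
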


\begin{definition}
A space $X$ is semilocally simply connected if each point $x\in X$ has a neighborhood $U$ such that $\pi_1(U,x)\subset\pi_1(X,x)$ is trivial. 
\end{definition}

\begin{theorem}
If a space $X$ is path connected and locally path connected, then $X$ has a simply connected covering space if and only if $X$ is semilocally simple-connected.
\end{theorem}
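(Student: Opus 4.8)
The plan is to prove the two implications separately, the reverse direction—the actual construction of a simply connected covering—being the substantial part.

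For the forward implication, suppose $p\colon\tilde X\to X$ is a simply connected covering space and fix $x\in X$. By the definition of a covering space, choose an evenly covered neighborhood $U$ of $x$, so that $p^{-1}(U)$ is a disjoint union of open sets each mapped homeomorphically onto $U$; pick one such sheet $\tilde U$ and let $s\colon U\to\tilde U$ be the inverse homeomorphism. Given a loop $\gamma$ in $U$ based at $x$, the composite $s\circ\gamma$ is a loop in $\tilde X$; since $\tilde X$ is simply connected it is nullhomotopic there, and applying $p$ to that homotopy shows $\gamma=p\circ s\circ\gamma$ is nullhomotopic in $X$. Hence the map $\pi_1(U,x)\to\pi_1(X,x)$ induced by inclusion is trivial, which is exactly the statement that $X$ is semilocally simply connected.

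For the reverse implication, fix a basepoint $x_0\in X$ and set
$$\tilde X=\{\,[\gamma]\ :\ \gamma\colon[0,1]\to X\text{ is a path with }\gamma(0)=x_0\,\},$$
where $[\gamma]$ denotes the homotopy class rel endpoints, and define $p\colon\tilde X\to X$ by $p([\gamma])=\gamma(1)$. To topologize $\tilde X$, let $\mathcal U$ be the collection of path-connected open sets $U\subset X$ for which $\pi_1(U,x)\to\pi_1(X,x)$ is trivial; local path-connectedness together with semilocal simple connectedness guarantees that $\mathcal U$ is a basis for the topology of $X$. For $U\in\mathcal U$ and a path $\gamma$ from $x_0$ into $U$, put
$$U_{[\gamma]}=\{\,[\gamma\ast\eta]\ :\ \eta\text{ a path in }U\text{ with }\eta(0)=\gamma(1)\,\}.$$
First I would check that the sets $U_{[\gamma]}$ form a basis for a topology on $\tilde X$: that $U_{[\gamma']}=U_{[\gamma]}$ whenever $[\gamma']\in U_{[\gamma]}$, and that $p$ restricts to a bijection $U_{[\gamma]}\to U$—surjectivity uses path-connectedness of $U$, and injectivity uses the triviality of $\pi_1(U,\gamma(1))\to\pi_1(X,\gamma(1))$, since two paths in $U$ with the same endpoints differ by a loop in $U$, hence by a nullhomotopic loop in $X$. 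From this it follows that $p^{-1}(U)=\bigsqcup U_{[\gamma]}$ is a disjoint union of open sets each carried homeomorphically onto $U$, so $p$ is a covering map.

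Finally I would verify the two properties of $\tilde X$. For path-connectedness, given $[\gamma]\in\tilde X$ the assignment $t\mapsto[\gamma_t]$, where $\gamma_t(s)=\gamma(ts)$, is a path in $\tilde X$ from the class of the constant path at $x_0$ to $[\gamma]$. For simple connectedness, by the earlier proposition on $p_\ast$ the image $p_\ast(\pi_1(\tilde X,[x_0]))$ consists of the classes of loops at $x_0$ that lift to loops at $[x_0]$; but the lift of a loop $\gamma$ starting at $[x_0]$ is precisely $t\mapsto[\gamma_t]$, and this is a loop exactly when $[\gamma_1]=[x_0]$, i.e. when $\gamma$ is nullhomotopic. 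Hence $p_\ast(\pi_1(\tilde X,[x_0]))$ is trivial, and since $p_\ast$ is injective, $\pi_1(\tilde X,[x_0])$ is trivial. The main obstacle is the basis verification for $\tilde X$ together with the proof that $p$ is a local homeomorphism there: this is the point at which both hypotheses on $X$ are used in an essential way, and it requires the careful bookkeeping sketched above.
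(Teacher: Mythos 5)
The paper offers no proof of this theorem: it is stated in Section 4 with the blanket remark that proofs are deferred to \cite{Hat} and \cite{Agu}. Your argument is exactly the standard construction from those references --- the forward direction via an evenly covered neighborhood and pushing a nullhomotopy down from $\tilde X$, and the converse via the space of rel-endpoint homotopy classes of paths from $x_0$, topologized by the sets $U_{[\gamma]}$ with $U$ ranging over path-connected open sets whose inclusion kills $\pi_1$ --- and it is correct in outline, with both hypotheses used exactly where you say they are. What remains is only the routine bookkeeping you yourself flag: the basis axioms for the sets $U_{[\gamma]}$ (in particular $U_{[\gamma']}=U_{[\gamma]}$ for $[\gamma']\in U_{[\gamma]}$), the continuity and openness of $p$ on these sets, and the continuity of $t\mapsto[\gamma_t]$, which is also needed to know that this formula really is the unique lift of $\gamma$ used in the simple-connectedness step.
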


\begin{theorem}
If $\tilde{X_1}\rightarrow X$ is a covering space and $\tilde{X}\rightarrow X$ is a simple-connected covering space, then $\tilde{X}$ is a covering space of $\tilde{X_1}$. Thus there is a partial ordering of covering spaces.
\end{theorem}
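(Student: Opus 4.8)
The plan is to build the desired covering $q\colon\tilde X\to\tilde{X_1}$ as a lift of $p$, and then check by hand that it is a covering map. Write $p\colon\tilde X\to X$ and $p_1\colon\tilde{X_1}\to X$ for the two given coverings, fix $x_0\in X$, and pick $\tilde x_0\in p^{-1}(x_0)$ and $\tilde x_1\in p_1^{-1}(x_0)$. Since $\tilde X$ is simply connected, $p_*\big(\pi_1(\tilde X,\tilde x_0)\big)$ is the trivial subgroup of $\pi_1(X,x_0)$, hence is contained in $(p_1)_*\big(\pi_1(\tilde{X_1},\tilde x_1)\big)$. Assuming, as throughout this section, that the spaces involved are path connected and locally path connected, the lifting criterion for covering spaces (a consequence of the homotopy lifting property; see \cite{Hat}) applies: $p\colon(\tilde X,\tilde x_0)\to(X,x_0)$ lifts through $p_1$, giving a continuous map $q\colon(\tilde X,\tilde x_0)\to(\tilde{X_1},\tilde x_1)$ with $p_1\circ q=p$.

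Next I would show $q$ is a covering. Fix $\tilde y\in\tilde{X_1}$, put $x=p_1(\tilde y)$, and use local path connectedness to choose a path-connected open neighbourhood $U$ of $x$ evenly covered by \emph{both} $p_1$ and $p$; write $p_1^{-1}(U)=\bigsqcup_\beta V_\beta$ and $p^{-1}(U)=\bigsqcup_\alpha W_\alpha$ with all restrictions $p_1|_{V_\beta}$ and $p|_{W_\alpha}$ homeomorphisms onto $U$. For each $\alpha$, the set $q(W_\alpha)$ is connected (as $W_\alpha$ is homeomorphic to the connected set $U$) and lies in $p_1^{-1}(U)$, so it is contained in a single sheet $V_{\beta(\alpha)}$; restricting $p_1\circ q=p$ to $W_\alpha$ gives $q|_{W_\alpha}=\big(p_1|_{V_{\beta(\alpha)}}\big)^{-1}\circ p|_{W_\alpha}$, a homeomorphism of $W_\alpha$ onto $V_{\beta(\alpha)}$. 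Hence $q^{-1}(V_\beta)=\bigsqcup_{\alpha\,:\,\beta(\alpha)=\beta}W_\alpha$, with $q$ a homeomorphism on each piece; so the open set $p_1^{-1}(U)\subseteq\tilde{X_1}$ is evenly covered by $q$ as soon as every $V_\beta$ occurs among the $q(W_\alpha)$, i.e.\ as soon as $q$ is surjective.

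Surjectivity comes from connectedness of $\tilde{X_1}$. The image $q(\tilde X)$ is open, since the previous paragraph exhibits $q$ as an open map; it is also closed, for if $\tilde y\notin q(\tilde X)$ and $V_{\beta_0}$ is the sheet over such a $U$ containing $\tilde y$, then $q(\tilde X)$ cannot meet $V_{\beta_0}$ (otherwise some $W_\alpha$ has $\beta(\alpha)=\beta_0$ and then $V_{\beta_0}=q(W_\alpha)\subseteq q(\tilde X)$, a contradiction), so $V_{\beta_0}$ is a neighbourhood of $\tilde y$ missing $q(\tilde X)$. As $\tilde{X_1}$ is connected and $q(\tilde X)\neq\emptyset$, we conclude $q(\tilde X)=\tilde{X_1}$, so $q$ is a covering map and $\tilde X$ covers $\tilde{X_1}$. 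In particular the simply connected (universal) covering space sits above every connected covering of $X$ for the relation ``is covered by'', which is thereby a partial order on the connected covering spaces of $X$ up to isomorphism over $X$.

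The heart of the argument is not producing $q$ — that is immediate from simple-connectedness plus the lifting criterion — but verifying that $q$ is itself a covering; the delicate points are the sheet-by-sheet bookkeeping over a neighbourhood $U$ that is simultaneously evenly covered by both maps, and the use of the standing hypotheses (path and local path connectedness, and connectedness of $\tilde{X_1}$) without which the lift $q$ need neither exist nor be surjective.
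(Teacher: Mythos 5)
Your proof is correct. The paper itself gives no argument for this theorem (it defers all proofs in this section to \cite{Hat} and \cite{Agu}), and what you wrote is exactly the standard argument from those sources: since $\pi_1(\tilde{X},\tilde{x}_0)$ is trivial, the lifting criterion produces $q:\tilde{X}\rightarrow\tilde{X_1}$ with $p_1\circ q=p$, and one then checks by hand, over a neighbourhood evenly covered by both $p$ and $p_1$, that $q$ is itself a covering. You are also right to make explicit the standing hypotheses the paper leaves tacit: path and local path connectedness (needed for the lifting criterion, and inherited by $\tilde{X}$ from $X$) and connectedness of $\tilde{X_1}$ (needed for surjectivity of $q$, without which the statement can fail); your open--closed argument for surjectivity and the sheet-by-sheet verification are both sound.
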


 The simply-connected covering space $\tilde{X}$ of $X$ is called the universal covering of $X$.
We will be only interested on Universal covers.\\

We introduce some basic facts about deck transformations.

\begin{definition}
An (self) isomorphism of covering spaces $\tilde{X}\rightarrow\tilde{X}$ is called a deck transformation. These forms a group $G(\tilde{X})$
\end{definition}

\begin{definition} 
A covering space $p:\tilde{X}\rightarrow X$ is normal if for each $x\in X$ and each pair of lifts $\tilde{x}, \tilde{x}'\in p^{-1}(x)$, there is a deck transformation taking $\tilde{x}$ to $\tilde{x}'$
\end{definition}

\begin{proposition}
Let $p:(\tilde{X},\tilde{x_0})\rightarrow(X,x_0)$ be a path-connected covering space of a path-connected, locally path-connected space $X$, and let 
$$H=p_\ast(\pi_1(\tilde{X},\tilde{x_0}))\leq\pi_1(X,x_0)$$
Then:
\begin{enumerate}
\item The group of deck transformations $G(\tilde{X})$ is isomorphic to $N(H)/ H$, where $N(H)$ is the normalizer subgroup.
\item The covering space is normal if and only if $H$ is a normal subgroup of $\pi_1(X,x_0)$
\end{enumerate}
\end{proposition}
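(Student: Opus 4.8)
The plan is to reduce both statements to the combinatorics of how the deck group and the monodromy action sit on the fiber $p^{-1}(x_0)$, which I will identify with the coset space $H\backslash\pi_1(X,x_0)$. Throughout, the hypothesis that $X$ is path connected and locally path connected lets me invoke the standard lifting criterion and uniqueness of lifts (see \cite{Hat}): if $q\colon\tilde{Z}\to X$ is a path-connected covering and $f\colon(Y,y_0)\to(X,x_0)$ is a map from a path-connected, locally path-connected space, then $f$ lifts to $\tilde{Z}$ through $q$ exactly when $f_\ast\pi_1(Y,y_0)\subseteq q_\ast\pi_1(\tilde{Z},\tilde{z_0})$, and any two lifts agreeing at one point coincide.

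First I would set up the fiber as a $\pi_1$-set. Lifting loops based at $x_0$, starting from a chosen point of $p^{-1}(x_0)$, defines a right action of $\pi_1(X,x_0)$ on $p^{-1}(x_0)$; it is well defined by the homotopy lifting property, it is transitive because $\tilde{X}$ is path connected, and the stabilizer of $\tilde{x_0}$ is exactly $H=p_\ast\pi_1(\tilde{X},\tilde{x_0})$ by the description of the image of $p_\ast$ proved above. Hence there is a $\pi_1$-equivariant bijection $p^{-1}(x_0)\cong H\backslash\pi_1(X,x_0)$ sending $\tilde{x_0}\leftrightarrow H$. Next, a deck transformation $\tau$ is by definition a self-homeomorphism of $\tilde{X}$ with $p\circ\tau=p$, i.e.\ a lift of $p$ through $p$; by uniqueness of lifts it is determined by the single value $\tau(\tilde{x_0})\in p^{-1}(x_0)$, and by the lifting criterion a deck transformation carrying $\tilde{x_0}$ to $\tilde{x_1}$ exists iff $p_\ast\pi_1(\tilde{X},\tilde{x_0})=p_\ast\pi_1(\tilde{X},\tilde{x_1})$. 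Choosing a path $\gamma$ in $\tilde{X}$ from $\tilde{x_0}$ to $\tilde{x_1}$ and writing $g=[p\circ\gamma]$, the change-of-basepoint isomorphism gives $p_\ast\pi_1(\tilde{X},\tilde{x_1})=g^{-1}Hg$, so $\tilde{x_1}$ corresponds to the coset $Hg$ and the deck transformation exists precisely when $g\in N(H)$.

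For part (1) I would then define $\Phi\colon N(H)\to G(\tilde{X})$ by letting $\Phi(g)=\tau_g$ be the unique deck transformation whose value at $\tilde{x_0}$ is the endpoint of the lift, starting at $\tilde{x_0}$, of any loop representing $g$; this is well defined by the homotopy lifting property. It is a homomorphism: lifting a concatenation $\gamma_1\ast\gamma_2$ from $\tilde{x_0}$ one reaches $\tau_{g_1}(\tilde{x_0})$ after the first piece, and since $\tau_{g_1}$ carries lifts to lifts the second piece ends at $\tau_{g_1}(\tau_{g_2}(\tilde{x_0}))$, so $\tau_{g_1g_2}=\tau_{g_1}\circ\tau_{g_2}$ by uniqueness of lifts. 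Surjectivity is immediate from the previous paragraph — every deck transformation is some $\tau_g$, and its defining $g$ automatically lies in $N(H)$ — and $g\in\ker\Phi$ iff the lift of a representing loop closes up at $\tilde{x_0}$, i.e.\ iff $g\in H$ by the image-of-$p_\ast$ description. Thus $G(\tilde{X})\cong N(H)/H$. In fiber terms, under $p^{-1}(x_0)\cong H\backslash\pi_1(X,x_0)$ the transformation $\tau_g$ acts by the left translation $Hg'\mapsto Hgg'$, which is well defined on right cosets precisely because $g\in N(H)$, and this action is free by uniqueness of lifts.

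For part (2): by the definition of a normal covering — together with the standard fact that, by path-connectedness and unique path lifting, transitivity of $G(\tilde{X})$ on one fiber forces it on every fiber — it suffices to decide when $G(\tilde{X})$ acts transitively on $p^{-1}(x_0)$. With the identifications above, $G(\tilde{X})=N(H)/H$ acts on $H\backslash\pi_1(X,x_0)$ by left translation, so the orbit of $\tilde{x_0}=H$ is $\{Hg:g\in N(H)\}$; this exhausts $H\backslash\pi_1(X,x_0)$ iff $N(H)=\pi_1(X,x_0)$, i.e.\ iff $H$ is normal. I expect the main obstacle to be the bookkeeping in the second paragraph: applying the lifting criterion and uniqueness of lifts correctly to self-maps of $\tilde{X}$, checking that change of basepoint along $\gamma$ really conjugates $H$ by $[p\circ\gamma]$ with orientation conventions kept consistent, and confirming that the deck action and the monodromy $\pi_1$-action on the fiber commute. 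Once these points are pinned down, both conclusions drop out of the coset picture.
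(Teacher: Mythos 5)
Your argument is correct and is essentially the standard proof (the one in Hatcher, Proposition 1.39): identify the fiber with $H\backslash\pi_1(X,x_0)$ via the monodromy action, show deck transformations correspond, via the lifting criterion and uniqueness of lifts, to cosets $Hg$ with $g\in N(H)$, and read off both statements from the coset picture. Note that the paper itself offers no proof of this proposition -- Section 4 explicitly defers all proofs to \cite{Hat} and \cite{Agu} -- so your write-up matches the intended (cited) argument; the conventions you flag (composition order, change of basepoint conjugating $H$ by $[p\circ\gamma]$, transitivity on one fiber implying it on all fibers) all check out with the paper's definitions.
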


\begin{corollary}
If $\tilde{X}$ is a normal covering, then $G(\tilde{X})\cong\pi_1(X,x_0)/H$. Thus if $\tilde{X}$ is the universal covering, then $G(\tilde{X})\cong\pi_1(X,x_0)$.
\end{corollary}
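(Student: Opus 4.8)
The plan is to deduce the statement directly from the preceding Proposition, so essentially no new argument is needed beyond a formal substitution. First I would use part (2) of that Proposition: since $\tilde{X}\to X$ is assumed to be a normal covering, the subgroup $H=p_\ast(\pi_1(\tilde{X},\tilde{x_0}))$ is normal in $\pi_1(X,x_0)$. Hence its normalizer is the whole group, $N(H)=\pi_1(X,x_0)$. Plugging this into the isomorphism of part (1), namely $G(\tilde{X})\cong N(H)/H$, yields $G(\tilde{X})\cong\pi_1(X,x_0)/H$, which is the first assertion.

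For the second assertion I would specialize to the universal covering. In that case $\tilde{X}$ is simply connected, so $\pi_1(\tilde{X},\tilde{x_0})$ is trivial; since $p_\ast$ is injective (by the earlier Proposition on the injectivity of $p_\ast$), its image $H$ is the trivial subgroup of $\pi_1(X,x_0)$. The trivial subgroup is normal, so by part (2) of the preceding Proposition the universal covering is indeed a normal covering, and the first part of the corollary applies. Therefore $G(\tilde{X})\cong\pi_1(X,x_0)/H=\pi_1(X,x_0)/\{1\}\cong\pi_1(X,x_0)$.

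The only point requiring a moment of attention — and the closest thing to an obstacle — is the implicit verification that the universal covering is itself normal, so that the quotient formula is legitimately applicable; but this is immediate from part (2) of the Proposition once one observes that $H=\{1\}$. Everything else is a purely formal consequence of the Proposition, with no constructions or estimates involved.
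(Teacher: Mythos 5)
Your proof is correct: it is exactly the standard deduction from the preceding Proposition, using part (2) to get $N(H)=\pi_1(X,x_0)$ and part (1) to conclude $G(\tilde{X})\cong\pi_1(X,x_0)/H$, and then specializing to $H=\{1\}$ for the universal covering (where noting that the trivial subgroup is automatically normal is the right small check). The paper itself omits the proof, deferring to Hatcher, but your argument is precisely the intended one, so nothing further is needed.
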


Thus if we have a group $\Gamma$ acting properly discontinuously on a simply-connected, locally path-connected space $M$, a base point $x_0\in M$ and we have the quotient map $p:M\rightarrow M/\Gamma$ then by corollary 4.19 we have that $\pi_1(M/\Gamma,p(x_0))\cong\Gamma$.   

\subsection{Cayley graph and Cayley complex}

In the last section we saw that we can realize any group $G$ as a fundamental group of some space. More precisely, given any group $G$ we are going to construct a simply-connected topological space $X$ such that $G$ acts free and proper discontinuously. Then by Corollary 4.19 we have that $G$ is the fundamental group of $X/G$.\\

To construct this space we proceed as follows: Let $G$ be a finitely generated and finitely presentable group, let $S=\{c_1,\ldots,c_k\}$ the generating set of $G$. Let's consider $A=S\cup S^{-1}$ where $S^{-1}=\{c_1^{-1},\ldots,c_k^{-1}\}$ is the set of formal inverses of the generating set $S$ (if there is an element $a\in S$ such that $a^2=1$, we take $a^{-1}\in S^{-1}$ as formal inverse). Let 
$$G=\langle A | R_1=\cdots=R_p=1\rangle$$ 
be a presentation of $G$, where $R_i$ are relations on elements of $A$ and consider the involution $\iota: A\rightarrow A$ given by $\iota(c_j)=\overline{c_j}$ for $j=1,\ldots,k$ where $\overline{c_j}=c_j^{-1}$ as element of $G$. We call this presentation admissible.

\begin{definition}
Suppose that we have an admissible presentation of the group $G$. Then, the Cayley graph of $G$ respect to the presentation is given by $C(G)=(V,E,\iota)$, where
\begin{itemize}
\item The set of vertices is given by $V=G$.
\item Two vertex $g,h\in G$ are connected by an edge if $g^{-1}h\in A$. Since $G$ is a group, then $g$ and $h$ are connected if and only if $h=ga$ for $a\in A$. Thus we say that $h$ and $ga$ are connected by a directed edge labelled by $a$.
\item The involution $\iota:A\rightarrow A$, is the involution which takes the edge which connects $h$ and $g$ labelled by $a$, with the edge which connects $g$ and $h$ labelled by $a^{-1}$.  
\end{itemize}
\end{definition} 

\begin{bf} Example:\end{bf} Let $F$ be a free group over the set $X$. Then $F$ has a presentation 
$$F=\langle\{x\}_{x\in X}|\emptyset\rangle$$
In order to have an admissible presentation we add a generator $x^{-1}$ for each $x\in X$ and we have
$$F=\langle \{x,x^{-1}\}_{x\in X}| xx^{-1}=x^{-1}x=e\rangle$$
Thus the vertices on $C(F)$ are labelled by the reduced words over the set of generators $\{x,x^{-1}\}_{x\in X}$, where a reduced word is a word in this letters without subwords of the form $xx^{-1}$ for $x\in X$. There is a geometric edge labelled by $xx^{-1}$ between $x_1x_2\cdots x_kx$ and $x_1x_2\cdots x_k$ where $x_1,\ldots,x_k\in X$. The corresponding Cayley graph is a Tree and hence its geometric realisation $|C(F)|_\iota$ is simply-connected (see Figure 25).

 \begin{figure}[h]
\includegraphics{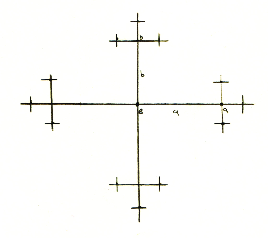}
\centering
\caption{Cayley graph of $F_2$}
\end{figure}

Let $R_j, 1\leq j\leq p$ be a relation in $G$, written as $R_j=a_{j_1}\cdots a_{j_k}$ where $a_{j_i}\in A$. Then any $g\in G$ satisfies 
$$gR_j=g(a_{j_1}\cdots a_{j_k})=g$$
thus there is a loop in $C(G)$ starting and ending at $g$ consisting of edges labelled by $a_{j_1},\ldots,a_{j_k}$ precisely in that order. In the geometric realization of $C(G)$ this loops are homeomorphic to circles and we can glue discs along this circles. The resulting space is called the Cayley 2-complex of $G$ whit respect the given presentation and denoted by $C_2(G)$.\\

\begin{bf}Example:\end{bf} Let's consider the group $\mathbb{Z}\times\mathbb{Z}$ whit the admisible presentation 
$$\langle a,b,a^{-1},b^{-1}| aba^{-1}b^{-1}=aa^{-1}=bb^{-1}=e\rangle$$
then $C_2(\mathbb{Z}\times\mathbb{Z})$ is as shown the following figure.

 \begin{figure}[h]
\includegraphics{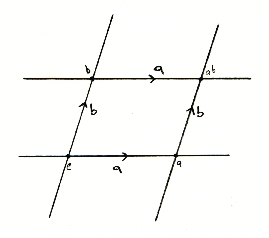}
\centering
\caption{Cayley 2-complex of $\mathbb{Z}\times\mathbb{Z}$}
\end{figure}

Observe that $G$ acts on itself by left action, thus $G$ acts on the set of vertices of $C(G)$. Extend this action into an action on $C(G)$ in the following way: if $k\in G$, then we send the edge which connects $g$ and $h$ to the edge which connects $kg$ and $kh$. This is well defined because if $g^{-1}h\in A$, then $(kg)^{-1}kh=g^{-1}k^{-1}kh=g^{-1}h\in A$. This action induce an action on the geometric realization $|C(G)|_\iota$ and extends to $C_2(G)$ by sending a disc attached to the loop corresponding to a relation $R_j$, to the disc attached to the loop corresponding to a relation $kR_j$. The action of $G$ on $C(G)$ (see \cite{Dru}) is free, transitive and if we have a neighborhood small enough, we will have at most the number of elements in a cycle satisfying $U\cap g(U)\neq\emptyset$. Since the cycles are finite, then we have that the action is proper discontinuous. Thus we have the following proposition.

\begin{proposition}
If $G$ is a group generated by $S$, then $C_2(G)$ is the universal covering of $X_G$, where $X_G$ is the space with $\pi_1(X_G)\cong G$ constructed by taking a wedge of circles, one for each generator in $S\cup S^{-1}$, and attaching a disc for each relation.
\end{proposition}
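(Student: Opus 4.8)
The plan is to realize $X_G$ as the orbit space $C_2(G)/G$ of the left action described just above the statement, and then to show that the associated quotient map is a covering map whose total space is simply connected; by Theorem 4.14 this forces $C_2(G)$ to be \emph{the} universal covering of $X_G$, and Corollary 4.19 then also recovers $\pi_1(X_G)\cong G$. The real obstacle is the simple-connectivity of $C_2(G)$: this is exactly the van Kampen-type computation that identifies the universal cover of a presentation complex, and everything else is bookkeeping about the cell structure together with the covering-space dictionary already recorded in Section 4.

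First I would check that $C_2(G)/G$ is (homeomorphic to) $X_G$. On vertices the $G$-action is transitive, so the vertex set has a single orbit. A geometric edge of $C(G)$ joining $g$ and $ga$ and labelled by $\{a,\bar a\}$ is carried by $k\in G$ to the geometric edge joining $kg$ and $kga$ with the same label, so the orbits of edges are indexed by the geometric edges, i.e.\ by the generators (each $a\in S\cup S^{-1}$ naming one of the two orientations of a circle in the quotient); hence the image of $|C(G)|_\iota$ is a wedge of circles, one per generator. Similarly the disc of $C_2(G)$ glued along the loop at $g$ spelling $R_j$ is carried by $k$ to the disc glued along the loop at $kg$ spelling $R_j$, so disc orbits are indexed by $R_1,\dots,R_p$. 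Since $G$ permutes the cells, the quotient map is open and identifies $C_2(G)/G$ with the space built from the wedge of circles by attaching one disc for each relation — that is, with $X_G$.

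Next I would verify that $p\colon C_2(G)\to X_G$ is a covering map. The action of $G$ on $C(G)$, hence on $|C(G)|_\iota$ and on $C_2(G)$, was shown above to be free and properly discontinuous, and $C_2(G)$ is a Hausdorff, locally path-connected space built from cell data; for such an action the orbit map is a covering, since around the image of any point one gets an evenly covered neighbourhood by choosing a small neighbourhood $U$ with $gU\cap U=\emptyset$ for all $g\neq e$ (possible by freeness and proper discontinuity, Theorem 4.15 ensuring we remain inside a Hausdorff quotient), whose $G$-translates are the sheets over $p(U)$. For the crucial point, that $C_2(G)$ is simply connected: it is path-connected because $S$ generates $G$, so every vertex $g$ is joined to $e$ by an edge-path reading a word in $A$. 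Given a loop $\gamma$ based at $e$, by cellular approximation I may assume $\gamma$ lies in the $1$-skeleton $|C(G)|_\iota$, where it is homotopic rel basepoint to an edge-loop reading a word $w=a_{i_1}\cdots a_{i_m}$ in $A$; since edge-paths in $C(G)$ track multiplication, closing up at $e$ forces $w=_G e$, i.e.\ $w$ lies in the normal closure $N=\langle\!\langle R_1,\dots,R_p\rangle\!\rangle$ inside $F(A)$. Writing $w=\prod_{\ell}u_\ell\,R_{j_\ell}^{\varepsilon_\ell}\,u_\ell^{-1}$ in $F(A)$, the loop $\gamma$ becomes homotopic rel $e$ to a concatenation of loops of the form ``travel along $u_\ell$ to the vertex $g_\ell\in G$ it represents, traverse the loop spelling $R_{j_\ell}^{\varepsilon_\ell}$ based at $g_\ell$, travel back along $u_\ell^{-1}$''. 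But the loop spelling $R_{j_\ell}$ based at $g_\ell$ is the boundary of the disc of $C_2(G)$ attached along $g_\ell R_{j_\ell}$, hence nullhomotopic in $C_2(G)$; so each conjugate piece, and therefore $\gamma$, is nullhomotopic. Thus $\pi_1(C_2(G),e)=1$.

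Finally I would assemble the pieces: $p\colon C_2(G)\to X_G$ is a covering with simply connected total space, and $X_G$, being a CW complex, is path-connected, locally path-connected, and semilocally simply connected, so by Theorem 4.13 it has a universal covering, and by Theorem 4.14 any simply connected covering is that universal covering; hence $C_2(G)$ is the universal covering of $X_G$. (In particular, by Corollary 4.19, $G(C_2(G))\cong\pi_1(X_G)$, and since the deck group here is visibly $G$ acting as above, $\pi_1(X_G)\cong G$.) As noted, the one genuinely substantial step is the simple-connectivity argument of the previous paragraph; the quotient identification and the passage from a free properly discontinuous action to a covering are routine given Section 4.
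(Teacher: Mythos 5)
Your proposal is correct, but it reaches the conclusion by a genuinely different route than the paper. Both arguments begin the same way, by identifying $C_2(G)/G$ with $X_G$ and using that the left $G$-action is free and properly discontinuous so that the orbit map $p\colon C_2(G)\to X_G$ is a covering. The paper then argues \emph{indirectly}: it applies Corollary 4.19 to get $G\cong\pi_1(X_G)/p_\ast(\pi_1(C_2(G)))$, invokes the fact (built into the statement of the proposition) that $\pi_1(X_G)\cong G$, concludes from $\pi_1(X_G)\cong\pi_1(X_G)/p_\ast(\pi_1(C_2(G)))$ that $p_\ast(\pi_1(C_2(G)))$ is trivial, and finally uses injectivity of $p_\ast$ (Proposition 4.11) to deduce that $C_2(G)$ is simply connected. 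You instead prove simple-connectivity of $C_2(G)$ \emph{directly}: push a loop into the $1$-skeleton, read off a word $w$ with $w=_G e$, write $w$ in $F(A)$ as a product of conjugates $u_\ell R_{j_\ell}^{\pm1}u_\ell^{-1}$, and observe that each conjugated relator loop bounds an attached disc, so the original loop is nullhomotopic; the covering-space dictionary then yields the universal-cover statement and, via the deck group, even recovers $\pi_1(X_G)\cong G$ rather than assuming it. Your route is more self-contained and sidesteps the delicate cancellation step in the paper (deducing $H=1$ from $\pi_1(X_G)\cong\pi_1(X_G)/H$ requires the isomorphism to be the natural one, or a Hopficity-style argument, so your version is arguably the more robust one); the price is having to carry out the combinatorial nullhomotopy argument carefully (cellular approximation, and the fact that free insertion/deletion of $a\bar a$ corresponds to adding or removing backtracks, hence to homotopies in the graph). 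The paper's route is shorter given the machinery of Section 4 and the stipulated identification $\pi_1(X_G)\cong G$.
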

\begin{proof}
Let $p:C_2(G)\rightarrow C_2(G)/G$ by the quotient map given by identify the orbits of the action of $G$ on $C_2(G)$. Since $C_2(G)$ is arc-connected and locally arc-connected, since $S\cup S^{-1}$ is a generating set of $G$, then by corollary 4.19
$$G\cong\pi_1(C_2(G)/G)/p_\ast(\pi_1(C_2(G))).$$ 
Therefore, if we prove that $p_\ast(\pi_1(C_2(G)))$ is trivial, then we have $G\cong\pi_1(C_2(G)/G)$. To do this, we first identify $C_2(G)/G$ as $X_G$. Note that every vertex is identified in $C_2(G)/G$ because every group element is sent to any other group element because $S\cup S^{-1}$ is a generating set of $G$. Since every vertex in $C(G)$ has $|S|$ edges attached to it (one for every element of $S$), then we see that $C_2(G)/G$ is a wedge of $|S|$ many circles whit discs attached to corresponding relations on the generators. This is exactly $X_G$. Thus
$$C_2(G)/G\cong X_G.$$
Therefore, from above we also have 
$$G\cong\pi_1(X_G)/p_\ast(\pi_1(C_2(G))).$$
However, $X_G$ is constructed such that $\pi_1(X_G)\cong G$. It follows from 
$$\pi_1(X_G)\cong G\cong\pi_1(X_G)/p_\ast(\pi_1(C_2(G)))$$
that $p_\ast(\pi_1(C_2(G)))$ is trivial. Since $p:C_2(G)\rightarrow C_2(G)/G=X_G$ is a covering, then $p_\ast:\pi_1(C_2(G))\rightarrow\pi_1(X_G)$ is injective. Hence, $\pi_1(C_2(G))$ is trivial by above, and $C_2(G)$ is the universal covering for $X_G$. 
\end{proof}

\subsection{Classification of surfaces II: Unicity}

We apply the last proposition to prove the part of unicity of the classification theorem.

\begin{theorem}
The fundamental group of a surface $S_g$ is given by 
$$\pi_1(S_g)\cong\langle a_1,b_1,\ldots, a_g,b_g|\Pi_{i=1}^g a_ib_ia_i^{-1}b_i^{-1}=e\rangle.$$
These groups are non-isomorphic for different choices of $g$.
\end{theorem}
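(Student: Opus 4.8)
The plan is to realise $S_g$ as the presentation complex of the group
$$G_g:=\langle a_1,b_1,\ldots,a_g,b_g \mid \prod_{i=1}^{g} a_ib_ia_i^{-1}b_i^{-1}=e\rangle$$
and then feed this into the Cayley-complex machinery of Section 4, after which the non-isomorphism statement follows from abelianisation.

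First I would read off the cell structure of $S_g$ from the ribbon graph $\Gamma_g$. By construction $\Gamma_g$ has a single vertex, its $2g$ geometric edges are the pairs $(a_i,\bar a_i)$ and $(b_i,\bar b_i)$ for $1\le i\le g$, and — exactly as established in the proof of Theorem 3.1 — it has a single face whose boundary, traversed once, spells the word $a_1b_1\bar a_1\bar b_1\cdots a_gb_g\bar a_g\bar b_g$. Hence $|\Gamma_g|_I$ is a wedge of $2g$ circles and, by Proposition 2.10, $S_g$ is obtained from it by attaching a single $2$-cell along the loop $\prod_{i=1}^{g} a_ib_ia_i^{-1}b_i^{-1}$. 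For $g=0$ the graph has no edges, $S_0=\mathbb{S}^2$, and the empty product gives the trivial group, consistent with the statement.

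Next I would match this with the construction preceding Proposition 4.21. Equip $G_g$ with the admissible presentation obtained by adjoining formal inverses $a_i^{-1},b_i^{-1}$ together with the bigon relations $a_ia_i^{-1}=b_ib_i^{-1}=e$; the associated space $X_{G_g}$ is a wedge of $4g$ circles with $2g$ bigons and one $4g$-gon attached. Collapsing each bigon is a deformation retraction, and the resulting complex is precisely the space $|\Gamma_g|_I\cup D^2$ described above, so $X_{G_g}$ is homotopy equivalent to $S_g$. By Proposition 4.21, $C_2(G_g)$ is the universal covering of $X_{G_g}$ and $G_g$ acts on it freely and properly discontinuously; Corollary 4.19 then yields $\pi_1(X_{G_g})\cong G_g$, and therefore $\pi_1(S_g)\cong G_g$, which is the first assertion.

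For the non-isomorphism statement I would pass to abelianisations. In $G_g^{\mathrm{ab}}$ each commutator $a_ib_ia_i^{-1}b_i^{-1}$ becomes trivial, so the single relator is vacuous and $G_g^{\mathrm{ab}}\cong\mathbb{Z}^{2g}$. Since abelianisation is a functor, an isomorphism $G_g\cong G_{g'}$ would induce $\mathbb{Z}^{2g}\cong\mathbb{Z}^{2g'}$, and comparing ranks (for instance via $\dim_{\mathbb{Q}}(\mathbb{Z}^{2g}\otimes_{\mathbb{Z}}\mathbb{Q})=2g$) forces $g=g'$. Together with Theorem 3.1 this shows that every compact oriented surface is homeomorphic to $S_g$ for a unique $g\ge 0$, completing the classification. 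I expect the only genuinely delicate point to be the first step: making rigorous the identification of $S_g$ with the presentation complex, i.e.\ verifying that the face of $\Gamma_g$ really does spell the relator word and carefully accounting for the bigon bookkeeping in the admissible presentation; once that is settled, everything else is a direct application of the results of Sections 3 and 4.
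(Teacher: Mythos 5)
Your argument for the presentation $\pi_1(S_g)\cong A_g$ is essentially the route the paper takes: describe $S_g$ via the one-vertex, one-face ribbon graph $\Gamma_g$ as a wedge of $2g$ circles with a single disc attached along $a_1b_1a_1^{-1}b_1^{-1}\cdots a_gb_ga_g^{-1}b_g^{-1}$, and then invoke Proposition 4.21 and Corollary 4.19 to identify this with $X_{A_g}$ and conclude $\pi_1(S_g)\cong A_g$. You are, however, more careful than the paper at the one point it glosses over: the admissible presentation forces $X_{A_g}$ to be a wedge of $4g$ circles with bigons attached for the relations $a_ia_i^{-1}=e$, $b_ib_i^{-1}=e$, and your observation that collapsing these bigons is a homotopy equivalence onto $|\Gamma_g|_I\cup D^2$ is exactly what justifies the paper's bare assertion that ``this description is the same as the surface $S_g$'' (homotopy equivalence suffices, since only $\pi_1$ is at stake). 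More significantly, your abelianization step, $A_g^{\mathrm{ab}}\cong\mathbb{Z}^{2g}$ and comparison of ranks, proves the second assertion of the theorem, that the groups $A_g$ are pairwise non-isomorphic; the paper states this claim in the theorem but its proof never addresses it, so your proposal actually closes a gap in the paper's argument rather than merely reproducing it.
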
 
\begin{proof}
For $g=0$ we have that $\pi_1(S_0)=\{e\}$ since $S_0$ is simple-connected. Let's assume that $g\geq1$ and we define
$$A_g:=\langle a_1,b_1,\ldots, a_g,b_g|\Pi_{i=1}^g a_ib_ia_i^{-1}b_i^{-1}=e\rangle$$

If we attach the inverse of the generators to this presentation and we construct the associated Cayley graph $C(A_g)$ and the Cayley 2-complex $C_2(A_g)$. Then by the proposition 4.21 we have that $C_2(A_g)/A_g$ is homeomorphic to a wedge of circles labelled by $a_1,b_1,a_1^{-1},b_1^{-1},\ldots,a_g,b_g,a_g^{-1},b_g^{-1}$, with a disc attached to them. This description is the same as the surface $S_g$, therefore by Proposition 4.21 we have $\pi_1(S_g)=A_g$.
\end{proof}


\section{Combinatorial description of the fundamental group using ribbon graphs}

In this section, we relate the filling ribbon graph of a surface and its fundamental group. We know that given a surface $S$ exists a filling ribbon graph. This ribbon graphs are not unique but we can deform this graphs to a ribbon graph of type $\Gamma_g$. Using different ribbon graphs we can compute the fundamental group. This gives us different presentations of the fundamental group.\\

Let $\Gamma$ be a filling ribbon graph. Denote by $E, V$ and $F$ the set of edges, vertices and faces respectively.

\begin{definition}[Combinatorial paths and loops]
A discrete path is a finite sequence $(e_1,\ldots, e_n)$ of edges such that $e_i^+=e_{i+1}^-$. The starting point of such path is $e_1^+$ and the ending point is $e_n^+$. A path is a discrete loop if its starting point and ending point are the same. We say that a loop has a base point at $v_0$, if $v_0$ is the starting and ending point. The inverse path of $e=(e_1,\ldots,e_n)$ is $\bar{e}=(\overline{e_n},\ldots,\overline{e_1})$. 
\end{definition}

Let $F(E)$ be the free group generated by the set of edges; note that every path defines an element of $F(E)$. Let $L_\Gamma^{v_0}$ be the image of the loops with base point $v_0$ on $F(E)$, then $L_\Gamma^{v_0}$ is a subgroup of $F(E)$. Let $R_\Gamma^{v_0}$ be the subgroup of $L_\Gamma^{v_0}$ normally generated by the faces $f=(e_1,\ldots,e_k)$.

\begin{definition} Let $\Gamma$ be a filling ribbon graph then: 
\begin{itemize}
\item The group $L_\Gamma^{v_0}$ is the group of loops with base point $v_0$.
\item The group $R_\Gamma^{v_0}$ is the group of homotopically trivial loops.
\item The ribbon fundamental group is $\hat{\pi_1}(\Gamma,v_0)=L_\Gamma^{v_0}/R_\Gamma^{v_0}$.
\item Two paths $e$ and $f$ with the same starting and final point are homotopics if the loop $e\bar{f}$ is an element of $R_\Gamma^{v_0}$.
\end{itemize}
\end{definition}

Now we relate the fundamental group of a surface $S$ with the ribbon fundamental group of its ribbon graph.

\begin{theorem}
Let $S$ be a closed surface and $i: |\Gamma|_I\rightarrow S$ be an embedding of the geometric realization of a filling ribbon graph $\Gamma$ on $S$. Then the natural mapping $i_\ast:\hat{\pi_1}(\Gamma, v_0)\rightarrow\pi_1(S,v_0)$, which send every combinatorial loop to its geometric realization, is an isomorphism of the ribbon fundamental group and the fundamental group of the surface.
\end{theorem}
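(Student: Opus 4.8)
The plan is to build a filling ribbon graph's geometric realization into a CW-structure on $S$ and then apply van Kampen (or, equivalently, the deck-transformation machinery of Section 4.3) to match the combinatorial presentation of $\hat{\pi_1}(\Gamma,v_0)$ with the topological presentation of $\pi_1(S,v_0)$. First I would use Corollary 2.13 and Proposition 2.10 to realize $S$, up to homeomorphism, as the mapping cone obtained from $|\Gamma|_I$ by attaching one $2$-cell $D_f$ for each face $f\in F$ along the boundary loop that the face $f=(e_1,\dots,e_k)$ traces out in $|\Gamma|_I$. Thus $S$ has a CW-structure with $|V|$ vertices, $|E|$ geometric edges, and $|F|$ faces, and $|\Gamma|_I$ is its $1$-skeleton.

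Next I would compute $\pi_1(|\Gamma|_I,v_0)$. A connected graph is homotopy equivalent to a wedge of circles; concretely, picking a spanning tree $T\subset\Gamma$, the fundamental group of $|\Gamma|_I$ based at $v_0$ is free on the geometric edges not in $T$, and every element is represented by a discrete loop based at $v_0$ in the sense of Definition 5.1. This identifies $L_\Gamma^{v_0}$ with the (surjective, by the path-concatenation description) image in $F(E)$ of $\pi_1(|\Gamma|_I,v_0)$: more precisely the map sending a discrete loop to its geometric realization descends to a surjection $L_\Gamma^{v_0}\twoheadrightarrow\pi_1(|\Gamma|_I,v_0)$ whose kernel consists exactly of the relations already built into $F(E)$ when one trivializes the tree edges (backtracking cancellations $e\bar e$), so in fact $L_\Gamma^{v_0}\big/\langle\!\langle\text{tree-edge backtracks}\rangle\!\rangle\cong\pi_1(|\Gamma|_I,v_0)$. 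Then I would invoke the standard effect of attaching $2$-cells: by van Kampen, $\pi_1(S,v_0)\cong\pi_1(|\Gamma|_I,v_0)$ modulo the normal subgroup generated by the attaching loops of the $D_f$. But the attaching loop of $D_f$ is precisely the geometric realization of the face word $f=(e_1,\dots,e_k)$, which is how $R_\Gamma^{v_0}$ was defined as a normal subgroup of $L_\Gamma^{v_0}$. Chasing the quotients, $\pi_1(S,v_0)\cong L_\Gamma^{v_0}/R_\Gamma^{v_0}=\hat{\pi_1}(\Gamma,v_0)$, and this isomorphism is exactly $i_\ast$.

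The steps to nail down carefully are: (i) that $i_\ast$ is well defined, i.e.\ a discrete loop lying in $R_\Gamma^{v_0}$ maps to a nullhomotopic loop in $S$ — this is immediate since each face loop bounds the disc $D_f$; (ii) surjectivity of $i_\ast$ — every loop in $S$ is homotopic into the $1$-skeleton $|\Gamma|_I$ (cellular approximation) and then, after a further homotopy across the tree $T$, to a discrete loop based at $v_0$; (iii) injectivity — if a discrete loop is nullhomotopic in $S$, then by van Kampen applied to the cell structure its class already lies in the normal closure of the face words, hence in $R_\Gamma^{v_0}$. The main obstacle is step (iii): one must be sure that the only new relations introduced by passing from $|\Gamma|_I$ to $S$ are the face words and nothing more — equivalently, that the CW-presentation of $\pi_1(S)$ has exactly the generators (edges, modulo tree and backtracking) and relators (faces) that define $\hat{\pi_1}(\Gamma,v_0)$. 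This is handled cleanly by the van Kampen theorem for a CW-pair, or alternatively by the route of Section 4.3: the surface $S_g$ together with its standard ribbon graph $\Gamma_g$ has $C_2(\hat\pi_1(\Gamma_g,v_0))$ as universal cover by Proposition 4.21, and Theorem 4.22 already records $\pi_1(S_g)\cong A_g$, so for the model graphs the statement is Proposition 4.21 verbatim; the general case then follows because any filling ribbon graph of $S$ is carried by the deformations of Theorem 3.1 to some $\Gamma_g$, and those deformations (edge/vertex eliminations, edge additions) visibly induce isomorphisms on both $\hat{\pi_1}$ and on $\pi_1$ of the realization, compatibly with $i_\ast$. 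I would present the van Kampen argument as the main proof and mention the deformation argument as an alternative consistency check.
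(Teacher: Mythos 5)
Your proposal is correct in substance but follows a genuinely different main route from the paper. The paper proves this theorem by reduction to the model case: it invokes the deformation moves of Theorem 3.1 to carry the filling ribbon graph $\Gamma$ to the standard graph $\Gamma_g$, asserts that these moves change neither $\hat{\pi_1}$ nor the homeomorphism type of $S$, and then quotes Proposition 4.21 and Theorem 4.22 to identify both $\hat{\pi_1}(\Gamma,v_0)$ and $\pi_1(S,v_0)$ with $A_g$ --- essentially your ``alternative consistency check,'' which the paper gives in three lines and without verifying that the edge/vertex eliminations induce isomorphisms on $\hat{\pi_1}$ compatibly with $i_\ast$. Your main argument instead treats $|\Gamma|_I$ as the $1$-skeleton of a CW-structure on $S$ (via Proposition 2.10 and Corollary 2.13), computes $\pi_1(|\Gamma|_I,v_0)$ via a spanning tree, and applies van Kampen to the attached face discs; this is more self-contained, works directly for an arbitrary filling ribbon graph without passing through the classification theorem, actually establishes that the specific map $i_\ast$ (not just an abstract isomorphism of groups) is bijective, and exhibits the presentation of $\pi_1(S)$ by non-tree edges and face words. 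One point you should make explicit, since it is the only place where the identification $L_\Gamma^{v_0}/R_\Gamma^{v_0}\cong\pi_1(S,v_0)$ can fail: the face relators alone do not force $e\bar{e}=1$ if $e$ and $\bar{e}$ are taken as independent generators of $F(E)$, so one must either read the paper's $F(E)$ as the free group on geometric edges with $\bar{e}=e^{-1}$, or include the backtracking relations among the homotopically trivial loops; your step (iii) and your identification of the kernel of $L_\Gamma^{v_0}\twoheadrightarrow\pi_1(|\Gamma|_I,v_0)$ silently use this convention (and the phrase ``tree-edge backtracks'' should be ``all backtracking loops,'' with tree edges only disappearing under the further isomorphism onto the free group on non-tree edges). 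With that convention stated, your van Kampen proof is complete and, if anything, fills in precisely the invariance claims the paper leaves unargued.
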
 
\begin{proof}
Since $\Gamma$ can be deformed to a filling ribbon graph $\Gamma_g$ which is a wedge of circles we have that $\hat{\Pi_1}(\Gamma, v_0)=\hat{\pi_1}(\Gamma_g,v_0)=A_g$ this by proposition 4.21. By theorem 4.22 we have that $\pi_1(S_g)=\pi_1(S)=A_g$. Therefore we have the following
$$\hat{\pi_1}(\gamma,v_0)=A_g=\pi_1(S)$$
and we are done.
\end{proof}
 
\section*{Acknowledgments}
The author would like to thank to A. Cano  for fruitful conversations, to J. R. Parker for his support and M. Montes de Oca Aquino and D. Enriquez for helping with the spelling correction.

\end{document}